\newtheorem{lem}{Lemma}
\newtheorem*{theorem}{Theorem}
\newtheorem*{corollary}{Corollary}
\newtheorem{pr}{Proposition}
\begin{document}

\begin{center}\textbf{Thompson's conjecture for alternating group}\end{center}

\begin{center}I. B. Gorshkov \footnote{The work is supported by the 
 Russian Science Foundation (project no. 15-11-10025)}\end{center}

\medskip
\textsl{Abstract}:
Let $G$ be a finite group, and let $N(G)$ be the set of sizes of its conjugacy
classes. We show that if a finite group $G$ has trivial
center and $N(G)$ equals to $N(Alt_n)$ or $N(Sym_n)$ for $n\geq 23$, then $G$ has a composition factor isomorphic to an alternating group $Alt_k$ such that $k\leq n$ and the half-interval $(k, n]$ contains no primes. As a corollary, we prove the Thompson's conjecture for simple alternating groups.

\textsl{Key words}: finite group, alternating group, conjugacy class, Thompson's conjecture.

\section{Introduction}

Consider a finite group $G$. For $g\in G$, let  
$g^G$ denote the conjugacy class of $G$ containing $g$, and $|g^G|$ denote the size of $g^G$. The centralizer of $g$ in $G$ is denoted by $C_G(g)$. Put $N(G)=\{n\  |\  \exists g\in G$ such that $|g^G|=n\}$. In
1987 Thompson posed the following conjecture concerning $N(G)$.
\medskip

\textbf{Thompson's Conjecture (see \cite{Kour}, Question 12.38)}. {\it If $L$ is a finite
simple non-abelian group, $G$ is a finite group with trivial
center, and $N(G)=N(L)$, then $G\simeq L$.}

\medskip

Thompson's conjecture was proved for many finite simple groups of Lie type.
Denote the alternating group of degree $n$ by $Alt_n$ and the symmetric group of degree $n$ by $Sym_n$. Alavi and Daneshkhah
proved that the groups $Alt_n$ with $n=p$, $n=p+1$, $n=p+2$ for prime
$p\geq5$ are characterized by $N(G)$ (see \cite{AD}). This conjecture has recently been confirmed for $Alt_{10}, Alt_{16}$,
$Alt_{22}$ and $Alt_{26}$ (see \cite{VasT}, \cite{Gor}, \cite{Xu}, \cite{Liu}). In \cite{GorA}, the
author showed that if $N(G)=N(Alt_n)$ or $N(G)=N(Sym_n)$ for $n\geq5$, then $G$ is non-solvable.
In \cite{GorA2}, we obtained some information about composition factors of a group $G$ in the case where $N(G)=N(Alt_n)$ or $N(G)=N(Sym_n)$ for $n>1361$. It was shown in \cite{GorA3} that Thompson's conjecture is valid for alternating group $Alt_n$, where $n>1361$.

Here is our main result.

\begin{theorem}
If $G$ is a finite group with trivial center such that $N(G)=N(Alt_n)$ or $N(G)=N(Sym_n)$ with $n\geq23$, then $G$ has a composition factor $S$ isomorphic to an alternating group $Alt_k$, where $k\leq n$ and the half-interval $(k, n]$ contains no primes.
\end{theorem}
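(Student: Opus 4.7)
The plan is to analyse specific conjugacy class sizes of $Alt_n$ (or $Sym_n$) to isolate primes that must divide the order of some non-abelian composition factor $S$ of $G$, then use the classification of finite simple groups together with prime-graph information to force $S\cong Alt_k$, and finally to verify the half-interval condition by extracting one more prime. Throughout I would lean on the earlier results in the paper: $G$ is non-solvable by \cite{GorA}, the range $n>1361$ is already settled in \cite{GorA3}, so the real content is $23\leq n\leq 1361$, and results from \cite{GorA2} already supply partial information about composition factors of $G$.

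For the setup, by Bertrand's postulate there is a prime $r$ with $n/2<r\leq n$; such a prime divides $|Alt_n|$ exactly once and the centralizer in $Alt_n$ (resp.\ $Sym_n$) of an $r$-cycle (or of its product with small disjoint cycles) is explicitly computable. The corresponding conjugacy class sizes in $N(Alt_n)$ force the existence of an element $x\in G$ whose class size $|x^G|$ carries a tightly prescribed set of prime factors. Let $F$ be the solvable radical of $G$ and let $\overline{G}=G/F$; then $\overline{G}$ has a non-abelian socle $\mathrm{Soc}(\overline{G})=S_1\times\cdots\times S_t$ with each $S_i$ simple, and by centralizer-size considerations the prime $r$ must divide $|S_i|$ for some $i$, which I will take to be our candidate composition factor $S$.

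The core of the argument is a CFSG case analysis: enumerate the simple groups $S$ whose order is divisible by $r$ and whose full spectrum of element orders is compatible with the prescribed class sizes of $N(Alt_n)$ (or $N(Sym_n)$). Using Zsigmondy primes and the standard tables of class sizes for groups of Lie type, I would eliminate sporadic groups (their orders involve only fixed small primes, which clashes with the requirement $r\geq 12$ for $n\geq 23$) and exceptional/classical groups (their order formulas force additional primes in $(k,n]$ that do not appear in $N(Alt_n)$). After these eliminations only $S\cong Alt_k$ with $r\leq k\leq n$ can survive; the lower bound $k\geq r$ is forced because no other action of a smaller alternating group realises an $r$-singular element of the required class size.

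The last step is to verify the half-interval condition. Suppose there were a prime $q$ with $k<q\leq n$. Then $q$ divides $|Alt_n|$ and so appears (with appropriate multiplicity) in at least one class size in $N(Alt_n)$, but $q\nmid|S|=|Alt_k|!/2$, so $q$-singular elements of $G$ must live in $F$ or act nontrivially on $\mathrm{Soc}(\overline{G})$ by outer automorphisms — both possibilities can be ruled out by comparing the centralizer structure with the explicit class sizes in $N(Alt_n)$. I expect the hardest part of this whole program to be the CFSG step when $n$ is small (close to $23$), since many small simple groups share prime sets with $Alt_n$ and only a careful combination of several class sizes — not just the one coming from the largest prime — suffices to exclude them; the same step also handles the delicate boundary cases where $n$, $n-1$, or $n-2$ happens to be prime and the centralizer of the $r$-element degenerates.
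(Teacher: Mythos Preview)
Your outline has a genuine gap at its most delicate point, and you also misidentify where the real difficulty lies.

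You pick a single prime $r\in(n/2,n]$ and argue that ``by centralizer-size considerations the prime $r$ must divide $|S_i|$ for some $i$''. Even granting this, knowing only that one prime $r$ divides $|S|$ is far too weak to run a CFSG elimination: for any fixed $r$ there are many simple groups of Lie type with $r\mid |S|$, and nothing in your sketch forces $S$ to contain \emph{all} the primes of $\Omega=\{t\ \text{prime}:n/2<t\leq n\}$. The paper's machinery is built precisely to close this gap. Proposition~1 shows that if an element $g$ of prime order $t\in\Omega$ has $\rho=\pi(|g^G|)\cap\Omega\neq\varnothing$, then there is a \emph{single} composition factor $S$ and an element $a\in S$ of order $t$ with $\rho\subseteq\pi(|a^S|)$. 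Proposition~2 then produces such a $g$ with $|g^G|\in\Phi_{|g|}$, so that $\rho=\Omega\setminus\{|g|\}$ and hence $\Omega\subset\pi(S)$. Only with this rich prime set in hand does the CFSG elimination (Lemmas~17 and~18) become the short computation you imagine.

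The proof of Proposition~2 is where the real work is, and your proposal contains no analogue of it. The argument is by contradiction: if no such $g$ exists, one shows $G\in D_\Omega$ with an abelian Hall $\Omega$-subgroup, and then builds a chain of strictly dividing class sizes of length $|\Omega|$ inside $\Psi_p$; this contradicts explicit upper bounds on $h(\Psi_p)$ obtained case by case (Lemmas~11 and~12, plus the special treatment of $n\in\{27,28,125,126\}$). None of this is ``centralizer-size considerations'' in the informal sense you suggest. Finally, your separate argument for the half-interval condition is unnecessary: once $\Omega\subset\pi(S)$ and $S\cong Alt_m$, one has $m\geq p$ with $p$ the largest prime at most $n$, so $(m,n]$ is automatically prime-free; the upper bound $m\leq n$ comes from the constraint that $|S|$ is not divisible by $r^2$ for $r\in\Omega$, together with $\pi(G)=\pi(V_n)$.
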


Theorem and the main result of \cite{GorA3} (see Lemma \ref{Alt1361} below) imply immediately the following corollary.

\begin{corollary}
Thompson's conjecture is true for an alternating group of degree $n$ if $n\geq5$ and $n$ or $
n-1$ is a sum of two primes.
\end{corollary}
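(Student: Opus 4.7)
The plan is to split on the degree $n$. For $n > 1361$, Lemma \ref{Alt1361} gives Thompson's conjecture directly, without using the sum-of-two-primes hypothesis. For $5 \leq n \leq 22$, a direct check shows every such $n$ satisfies the hypothesis, and each case is already in the literature: Alavi--Daneshkhah \cite{AD} handle $n \in \{p, p+1, p+2\}$ for primes $p \geq 5$, and the residual cases $n \in \{10, 16, 22\}$ are settled in \cite{VasT, Gor, Xu}. The essential new range is therefore $23 \leq n \leq 1361$.

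In that range, suppose $G$ has trivial center and $N(G) = N(Alt_n)$. The Theorem produces a composition factor $S \cong Alt_k$ with $k \leq n$ and $(k, n]$ free of primes; the task is to upgrade this to $G \cong Alt_n$. First I would show $k = n$. Using the hypothesis, write $n = r + s$ or $n - 1 = r + s$ with $r \leq s$ primes, and consider the corresponding element $g \in Alt_n$ built from disjoint cycles of lengths $r$ and $s$ (with an extra fixed point in the second case), whose centralizer in $Alt_n$ has a highly restricted structure (essentially cyclic of order $rs$, up to small parity factors). The class size $|g^{Alt_n}|$ belongs to $N(G)$ by assumption, and matching it against the admissible centralizer orders inside a group whose largest alternating composition factor is $Alt_k$ with $k < n$ should force a contradiction and hence $k = n$.

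Once $Alt_n$ appears as a composition factor of $G$, I would compare the maximal elements of $N(G) = N(Alt_n)$ with the corresponding indices in $Alt_n$ to bound $|G|$ from above by $|Alt_n|$; combined with the divisibility $|Alt_n| \mid |G|$ coming from the composition series, this yields $G \cong Alt_n$. The hard part is the first step: carefully tracking the centralizer of the $(r,s)$-element and ruling out nontrivial extensions of a strictly smaller alternating composition factor that could reproduce exactly this class size in $N(G)$. The Goldbach-type hypothesis is precisely the arithmetic input that makes the centralizer-matching work, so any weakening of the hypothesis would leave a gap in this step.
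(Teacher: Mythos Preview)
You have misread Lemma~\ref{Alt1361}. Despite its label, it is not a result that applies only to $n>1361$, and it does not give Thompson's conjecture ``directly.'' As stated, it is a conditional result valid for every $n\geq 27$: it assumes the sum-of-two-primes hypothesis \emph{and} the existence of a composition factor $Alt_{n-\varepsilon}$ with $\{n-\varepsilon,\dots,n\}$ free of primes, and from these it concludes $G\cong Alt_n$. In other words, Lemma~\ref{Alt1361} is exactly the ``upgrade'' step that you propose to carry out by hand in the range $23\leq n\leq 1361$. The paper's proof of the Corollary is therefore a single line: the Theorem supplies the composition factor $Alt_k$ with $(k,n]$ prime-free, Lemma~\ref{Alt1361} then yields $G\cong Alt_n$ for $n\geq 27$, and the remaining cases $5\leq n\leq 26$ are exactly those already settled in \cite{AD,VasT,Gor,Xu,Liu}.

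Your proposed argument for the upgrade (build an element from disjoint $r$- and $s$-cycles, exploit that its centralizer in $Alt_n$ is essentially cyclic of order $rs$, and match class sizes) is the right heuristic, and is in fact the strategy underlying \cite{GorA3}. But as written it is only a plan: the sentence ``matching it against the admissible centralizer orders \dots\ should force a contradiction'' is where all the work lives, and you yourself identify ruling out nontrivial extensions of a smaller $Alt_k$ as the hard part. None of this is carried out. So your proposal is not a proof of the Corollary; it is a (correct) outline of how one would reprove Lemma~\ref{Alt1361}, which is unnecessary here since that lemma is already available.
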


At this moment it is not known there are even positive integers, which can not be decomposed into a sum of two primes.

\section{Notation and preliminary results}
\begin{lem}[{\rm\cite{GorA3}}]\label{Alt1361}
Let $G$ be a finite group with trivial center such that $N(G)=N(Alt_n)$, where $n\geq 27$ and at least one of the numbers $n$ or $n-1$ is a sum of two primes. If $G$ contains a composition factor
$S\simeq Alt_{n-\varepsilon}$, where $\varepsilon$ is a non-negative integer such that the set $\{n-\varepsilon,...,n\}$
does not contain primes, then
$G\simeq Alt_n$.
\end{lem}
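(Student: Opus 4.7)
The plan is to leverage the hypothesis that $G$ already contains a composition factor $S\simeq Alt_{n-\varepsilon}$ close to $Alt_n$, together with the sharp arithmetic constraint supplied by the sum-of-two-primes assumption, to pin down $G$ up to isomorphism.

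First, I would pass to a minimal normal subgroup $R\trianglelefteq G$ whose composition factors include $S$. Because $G$ has trivial centre and $S$ is non-abelian simple, $R = S_1\times\cdots\times S_t$ with each $S_i\simeq Alt_{n-\varepsilon}$. Let $r$ be the largest prime with $r\leq n$; by the interval hypothesis $r\leq n-\varepsilon$, and $r$ divides $|S_i|$ to the first power only. A Sylow/centraliser comparison of the $r$-part of $|G|$ against the $r$-part of $|Alt_n|$ (also $r^1$, by the same prime-interval argument) is used to force $t=1$, so that $R\simeq S$; setting $C=C_G(R)$, the quotient $G/C$ then embeds in $\operatorname{Aut}(Alt_{n-\varepsilon})$.

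Second, I would exploit the sum-of-two-primes assumption $n=p+q$ (or $n-1=p+q$) with odd primes $p<q$. The corresponding $(p,q)$-cycle element of $Alt_n$ has centraliser of order $pq$, so $N(Alt_n)$ contains the highly constrained value $m=n!/(2pq)$. Realising $m$ as $|g^G|$ for some $g\in G$ yields a centraliser whose order is divisible by both $p$ and $q$, forcing $g$ to project to an element of order $pq$ in $S$. Matching the prime factorisation of $m$ against the possible centraliser orders in $S$ and in $\operatorname{Out}(S)$, and using prime-gap estimates that keep $p$ and $q$ coprime to $|\operatorname{Out}(S)|$ and to any prime that could appear in $|C|$, I would deduce simultaneously that $C=1$ and that $\varepsilon=0$: the latter because the factor $n(n-1)\cdots(n-\varepsilon+1)$ missing from $|S|$ cannot be absorbed elsewhere without violating the rigid prime content of $m$. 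Once $\varepsilon=0$ and $S\simeq Alt_n\trianglelefteq G\leq\operatorname{Aut}(Alt_n)=Sym_n$, comparing a class size of a transposition in $Sym_n\setminus Alt_n$ with $N(Alt_n)$ excludes $G=Sym_n$, giving $G\simeq Alt_n$.

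The hardest step is the second one: converting the single arithmetic datum $m\in N(G)$ into a simultaneous exclusion of a non-trivial $C_G(R)$, of multiple socle factors, and of a positive gap $\varepsilon$. This rests on delicate prime-gap estimates (ensuring $p,q$ avoid the small primes entering $|\operatorname{Out}(S)|$ or $|C|$) together with a case analysis of which subgroups of $Alt_{n-\varepsilon}$ admit elements of orders $p$ and $q$ with centralisers of the required shape; the threshold $n\geq 27$ enters to guarantee that the relevant prime-gap bounds actually hold.
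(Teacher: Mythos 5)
The paper does not actually prove Lemma \ref{Alt1361}; it imports it verbatim from \cite{GorA3}, so there is no internal proof to compare against. Judged on its own, your sketch contains two gaps that are not repairable by routine estimates. First, the opening reduction is illegitimate: a composition factor $S\simeq Alt_{n-\varepsilon}$ of $G$ need not occur inside a minimal normal subgroup of $G$; it only occurs in some chief factor $M/K$, i.e.\ in a minimal normal subgroup of a quotient $G/K$ with $K$ possibly nontrivial. Once you pass to such a quotient you no longer control its set of class sizes --- you only know that class sizes of quotients and of normal sections divide class sizes of $G$ (this is exactly Lemma \ref{factorKh}) --- and the entire subsequent analysis has to be run through that divisibility filter. (Also, trivial centre does not force minimal normal subgroups to be non-abelian: $Sym_4$ has trivial centre and an elementary abelian minimal normal subgroup.)

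Second, and more seriously, you repeatedly use the order of $G$ as if it were known. The hypothesis $N(G)=N(Alt_n)$ gives the \emph{set} of class sizes but not their multiplicities, hence neither $|G|$ nor the $r$-part $|G|_r$. So the ``Sylow/centraliser comparison of the $r$-part of $|G|$ against the $r$-part of $|Alt_n|$'' used to force $t=1$, and the deduction that an element $g$ with $|g^G|=n!/(2pq)$ has centraliser of order $pq$ (hence projects to an element of order $pq$ in $S$), both presuppose $|G|=|Alt_n|$. Pinning down $|G|$ is essentially the content of Thompson's conjecture in this setting, not an available input; the published arguments (\cite{GorA2}, \cite{GorA3}) extract order information only indirectly, e.g.\ via Lemma \ref{vas} (no class size is divisible by $r^2$ for the relevant primes $r$, so the corresponding Sylow subgroups are elementary abelian) and via Hall-subgroup and $D_\pi$ arguments. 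Your identification of the role of the sum-of-two-primes hypothesis --- producing the class size $n!/(2pq)$ coming from a product of a $p$-cycle and a $q$-cycle with centraliser of order $pq$ in $Alt_n$ --- is the right intuition for why that hypothesis appears, but as written the argument assumes what it must prove.
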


\begin{lem}[{\rm\cite[Lemma 3]{VasT}}]\label{pi}
If $G$ and $H$ are finite groups with trivial centers and $N(G) = N(H)$,
then $\pi(G) = \pi(H)$.
\end{lem}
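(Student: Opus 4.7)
\medskip
\noindent\textbf{Proof plan.} The plan is to establish the stronger fact that, for any finite group $G$ with trivial centre, the set $\pi(G)$ is already determined by $N(G)$ via the explicit formula
$$
\pi(G) \;=\; \bigcup_{n \in N(G)} \pi(n).
$$
Once this is known, the lemma is immediate, since $N(G) = N(H)$ will force the right-hand sides corresponding to $G$ and $H$ to coincide.

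One inclusion is trivial: every $n \in N(G)$ has the form $[G : C_G(g)]$ and therefore divides $|G|$, so each of its prime divisors lies in $\pi(G)$. The substantive direction is the reverse inclusion, which I would approach by contradiction. Assume some prime $p \in \pi(G)$ divides no element of $N(G)$, and fix a Sylow $p$-subgroup $P$ of $G$. For each conjugacy class $K$ of $G$, let $P$ act on $K$ by conjugation. Every $P$-orbit has $p$-power length, while $|K|$ is coprime to $p$ by assumption, so a routine fixed-point count (a $p$-group acting on a set of order prime to $p$ has a fixed point) produces a representative $x_K \in K$ centralised by $P$, i.e.\ $x_K \in C_G(P)$. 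Hence every conjugacy class of $G$ meets $C_G(P)$, so $G$ is covered by the $G$-conjugates of the single subgroup $C_G(P)$.

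The finishing step is to invoke the classical fact that a finite group is never the union of the conjugates of a proper subgroup (an elementary counting argument gives $|\bigcup_{g} gHg^{-1}| \leq 1 + [G:H](|H|-1) < |G|$ when $H < G$). Here $C_G(P)$ is indeed proper, since $C_G(P) = G$ would force $P \subseteq Z(G) = 1$ and contradict $p \in \pi(G)$. The resulting contradiction completes the reverse inclusion, and with it the lemma. I expect the main obstacle to be identifying $C_G(P)$ as the subgroup that ends up covering $G$; after that insight the proof collapses to two standard tools, namely $p$-group fixed-point counting and the covering lemma for conjugates.
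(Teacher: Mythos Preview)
Your argument is correct. The fixed-point counting for the $P$-action on each conjugacy class, followed by the covering-by-conjugates contradiction, is exactly the standard route to the identity $\pi(G)=\bigcup_{n\in N(G)}\pi(n)$ for centreless $G$, and the lemma follows at once.

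There is nothing to compare against here: the paper does not supply its own proof of this lemma but simply quotes it from \cite[Lemma~3]{VasT}. Your write-up would serve perfectly well as a self-contained proof in place of that citation.
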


\begin{lem}[{\rm \cite[Lemma 1.4]{GorA2}}]\label{factorKh}
Let $G$ be a finite group, $K\unlhd G$, $\overline{G}= G/K$, $x\in G$ and $\overline{x}=xK\in G/K$.
The following assertions are valid

(i) $|x^K|$ and $|\overline{x}^{\overline{G}}|$ divide $|x^G|$.

(ii) If $L$ and $M$ are neigboring members of a composition series of $G$, $L<M$, $S=M/L$, $x\in M$  and
$\widetilde{x}=xL$ is an image of $x$, then $|\widetilde{x}^S|$ divides $|x^G|$.

(iii) If $y\in G, xy=yx$, and $(|x|,|y|)=1$, then $C_G(xy)=C_G(x)\cap C_G(y)$.

(iv) If $(|x|, |K|) = 1$, then $C_{\overline{G}}(\overline{x}) = C_G(x)K/K$.

\end{lem}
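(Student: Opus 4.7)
The plan is to establish the four statements by elementary means: (i) by an orbit count for the conjugation action, (ii) by iterating (i) inside a subgroup plus the second isomorphism theorem, (iii) by computing with cyclic subgroups of coprime orders, and (iv) by invoking Schur--Zassenhaus inside $\langle x\rangle K$. I will treat them in the stated order, since (ii) reduces to (i).

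\textbf{Parts (i) and (ii).} For the first divisibility in (i), $G$ acts transitively on $x^G$ by conjugation, so restricting to $K$ partitions $x^G$ into $K$-orbits. Normality of $K$ yields $(g^{-1}xg)^K=g^{-1}(x^{gKg^{-1}})g=g^{-1}x^K g$, so every $K$-orbit has cardinality $|x^K|$ and hence $|x^K|$ divides $|x^G|$. For the second divisibility, the reduction $G\to\overline{G}$ restricts to a surjection $x^G\to\overline{x}^{\overline{G}}$ whose fibres are the non-empty intersections $x^G\cap yK$; any two such fibres are related by a $G$-conjugation (using $K\unlhd G$) and therefore have the same size, so $|\overline{x}^{\overline{G}}|$ divides $|x^G|$. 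For (ii), applying (i) with $L\unlhd M$ in place of $K\unlhd G$ gives $|\widetilde{x}^S|\mid|x^M|$; then the second isomorphism theorem yields $|x^M|=[M:M\cap C_G(x)]=[MC_G(x):C_G(x)]$, which divides $[G:C_G(x)]=|x^G|$, completing the chain.

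\textbf{Part (iii).} When $xy=yx$ and $(|x|,|y|)=1$, compute $(xy)^{|x|}=y^{|x|}$; since $\gcd(|x|,|y|)=1$ this element generates $\langle y\rangle$, and symmetrically $x\in\langle xy\rangle$, so $\langle xy\rangle=\langle x,y\rangle$. Taking centralizers gives $C_G(xy)=C_G(\langle xy\rangle)=C_G(\langle x,y\rangle)=C_G(x)\cap C_G(y)$.

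\textbf{Part (iv).} The inclusion $C_G(x)K/K\subseteq C_{\overline{G}}(\overline{x})$ is immediate. Conversely, given $\overline{g}\in C_{\overline{G}}(\overline{x})$, write $g^{-1}xg=xk$ with $k\in K$. Set $H=\langle x\rangle K$; the hypothesis $(|x|,|K|)=1$ forces $\langle x\rangle\cap K=1$, so $K$ is a normal Hall subgroup of $H$ with complement $\langle x\rangle$. Since $g^{-1}xg$ has the same order as $x$, the subgroup $\langle xk\rangle$ is another complement to $K$ in $H$; by Schur--Zassenhaus the two complements are conjugate by an element of $K$, say $(k')^{-1}xk'=xk=g^{-1}xg$. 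Then $gk'^{-1}\in C_G(x)$, so $g\in C_G(x)K$ and $\overline{g}\in C_G(x)K/K$, as required. The only step beyond routine manipulation of the isomorphism theorems is the application of Schur--Zassenhaus in (iv), and the delicate point there is that the conjugating element can be chosen inside the normal Hall subgroup $K$; the coprimeness hypothesis $(|x|,|K|)=1$ is precisely what turns $K$ into a Hall subgroup of $\langle x\rangle K$ and enables this conclusion.
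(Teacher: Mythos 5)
This lemma is quoted from \cite[Lemma 1.4]{GorA2} and the paper gives no proof of it, so there is no in-paper argument to compare against; I judge your proposal on its own. Parts (i), (iii) and (iv) are correct: the orbit decomposition and the equal-fibre argument for (i), the identity $\langle xy\rangle=\langle x,y\rangle$ for (iii), and the Schur--Zassenhaus argument for (iv) all work. In (iv) you should add one line: conjugacy of complements only gives $\langle x\rangle^{k'}=\langle xk\rangle$, i.e.\ $(k')^{-1}xk'=(xk)^{j}$ for some $j$; but both $(k')^{-1}xk'$ and $xk$ lie in the coset $xK$ and the projection $\langle xk\rangle\to H/K$ is injective, so $(xk)^{j}=xk$ and your element-level identity does hold.

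Part (ii), however, has a genuine gap. Your chain ends with the claim that $[M:M\cap C_G(x)]=[MC_G(x):C_G(x)]$ divides $[G:C_G(x)]$ ``by the second isomorphism theorem.'' A term $M$ of a composition series is subnormal in $G$ but in general not normal, so neither $M$ nor $C_G(x)$ need normalize the other: $MC_G(x)$ is then merely a product set rather than a subgroup, the second isomorphism theorem does not apply, and the asserted divisibility is false for a general subgroup $M$ containing $x$. Concretely, take $G=Sym_5$, $x=(12)$ and $M=Sym_4$ acting on $\{1,2,3,4\}$: then $|x^M|=6$ and $|x^G|=10$, and $6\nmid 10$. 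Since your argument uses nothing about $M$ beyond its being a subgroup, it cannot be repaired in place. The missing ingredient is precisely subnormality: a composition series provides a chain $M=M_0\unlhd M_1\unlhd\cdots\unlhd M_r=G$, and applying the first divisibility of (i) at each link gives $|x^{M}|\,\big|\,|x^{M_1}|\,\big|\,\cdots\,\big|\,|x^{G}|$. Combined with your correct first step $|\widetilde{x}^S|\mid|x^M|$ (which is (i) applied to $L\unlhd M$), this yields (ii).
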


\begin{lem}[{\rm \cite[Lemma 4]{VasT}}]\label{vas}
 Given a finite group $G$ with trivial center, if there exists a prime $p\in \pi(G)$ such that $p^2$ does not divide $|x^G|$ for all $x$ in $G$, then the Sylow $p$-subgroups of $G$ are elementary abelian.
\end{lem}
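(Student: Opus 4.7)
The plan is to combine the arithmetic constraints on $N(G)$ with the classification of finite simple groups to locate a composition factor of $G$. Let $p$ be the largest prime with $p\leq n$; by Bertrand's postulate $p>n/2$, so $p$ divides $|Alt_n|$ only to the first power and the $p$-part of every class size of $Alt_n$ (hence of every element of $N(G)$) is $1$ or $p$. Lemma \ref{pi} yields $\pi(G)=\pi(Alt_n)=\{q\text{ prime}:q\leq n\}$, and Lemma \ref{vas} forces the Sylow $p$-subgroups of $G$ to be elementary abelian. The goal is to locate a non-abelian composition factor $S$ of $G$ with $S\cong Alt_k$ and $k\geq p$; since $p$ is the largest prime not exceeding $n$, the half-interval $(k,n]\subseteq(p,n]$ will then automatically be prime-free.

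Using \cite{GorA} to rule out $G$ being solvable, I would fix a non-abelian simple composition factor $S=M/L$ of $G$. By Lemma \ref{factorKh}(ii) every class size of $S$ divides some element of $N(G)$, so the $p$-part of every $|\widetilde{x}^S|$ is at most $p$, and $\pi(S)\subseteq\{q\text{ prime}:q\leq n\}$. I would then run through the classification of finite non-abelian simple groups satisfying these restrictions and use conjugacy-class data to exclude every family outside the alternating groups: sporadic groups by direct inspection of their class-size tables, and Lie-type groups by producing, via Zsigmondy's theorem, either a class size whose $p$-part exceeds $p$ or one whose arithmetic shape cannot divide any element of $N(Alt_n)$.

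Once $S$ is forced to lie in the alternating family, write $S\cong Alt_k$. The bound $k\leq n$ follows from $\pi(S)\subseteq\pi(Alt_n)$, since any $Alt_m$ with $m>n$ would involve a prime in $(n,m]$ which does not lie in $\pi(Alt_n)$. To prove $k\geq p$, I would take $x\in G$ of order $p$ and track its image through the composition series using Lemma \ref{factorKh}: since the Sylow $p$-subgroups of $G$ are elementary abelian, $x$ must act non-trivially on at least one non-abelian simple section $S'$, which (by the previous paragraph applied to $S'$) is itself an alternating group $Alt_{k'}$; and alternating groups of degree less than $p$ contain no element of order $p$, so $k'\geq p$. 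Taking $S=S'$ completes the argument.

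The main obstacle is the Lie-type case analysis, since several classical families have class-size patterns superficially resembling those of $Alt_n$, and ruling them out requires sharp use of Zsigmondy primes together with the detailed formulas for centralizers of semisimple and unipotent elements — much in the spirit of \cite{GorA2}. A secondary delicate point is ensuring that the chosen $p$-element actually locates an alternating composition factor rather than merely some composition factor: here the fact that the Sylow $p$-subgroup of $G$ is elementary abelian with $p$ appearing only once in $|Alt_n|$ is what constrains the image of $x$ tightly enough to succeed.
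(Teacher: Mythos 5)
Your proposal does not prove the statement in question. Lemma \ref{vas} is a general fact about an arbitrary finite group $G$ with trivial center and a prime $p$ such that $p^2$ divides no conjugacy class size; it makes no reference to the hypothesis $N(G)=N(Alt_n)$, to composition factors, or to the classification of finite simple groups, and indeed the paper offers no proof of it at all, quoting it verbatim from \cite[Lemma 4]{VasT}. What you have written is instead a sketch of the paper's main Theorem (locating an alternating composition factor of $G$), and --- fatally for the present purpose --- it explicitly invokes Lemma \ref{vas} as an ingredient (``Lemma \ref{vas} forces the Sylow $p$-subgroups of $G$ to be elementary abelian''), so read as a proof of that lemma it is circular. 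None of the machinery you deploy (Bertrand's postulate, Zsigmondy primes, class-size tables of sporadic and Lie-type groups) is relevant to the claim that Sylow $p$-subgroups are elementary abelian.

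The actual argument is short and local. For $P\in Syl_p(G)$ and $x\in P$ one has $P\cap C_G(x)=C_P(x)$, so $|P:C_P(x)|$ divides $|x^G|$ and hence is $1$ or $p$; this bounds the class sizes inside $P$ and, combined with the hypothesis $Z(G)=1$ (which is essential: without it $P$ could contain central elements of order $p^2$), forces $P$ to be abelian of exponent $p$. If your real target was the main Theorem rather than this lemma, your sketch is closer in spirit to the paper's Section 3, but it would still be incomplete there: the assertion that a $p$-element ``must act non-trivially on some non-abelian simple section'' which is then necessarily alternating is exactly the content of Propositions \ref{rt1} and \ref{exist} and Lemma \ref{Lie}, and those require substantial work (the $D_\Omega$-property, the sets $\Phi_t$ and $\Psi_t$, and the path-length bounds of Lemmas \ref{hz} and \ref{hz2}) that your outline does not supply.
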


\begin{lem}\label{diretN}
If $G=A\times B$, then $N(G)=\{ab|a\in N(a), b\in N(b)\}$.
\end{lem}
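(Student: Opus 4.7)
The claim is a direct unpacking of the direct product structure; I read the statement as $N(G)=\{ab : a\in N(A),\, b\in N(B)\}$ (correcting the evident typo).

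First I would write every $g \in G = A \times B$ uniquely as a pair $g = (x,y)$ with $x\in A$ and $y\in B$, and record that conjugation in a direct product acts componentwise:
\[
(a,b)(x,y)(a,b)^{-1} = (axa^{-1},\, byb^{-1}).
\]
This immediately gives the conjugacy class decomposition $g^G = x^A \times y^B$ as a Cartesian product inside $A\times B$, and hence $|g^G| = |x^A|\cdot |y^B|$.

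The second step is just to let $g$ range over $G$, equivalently $x$ range over $A$ and $y$ over $B$: the set $N(G) = \{|g^G| : g\in G\}$ becomes $\{|x^A|\cdot |y^B| : x\in A,\ y\in B\}$, which is exactly $\{ab : a\in N(A),\ b\in N(B)\}$. This is the desired equality.

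There is no genuine obstacle here; the lemma is definitional. The only point worth stating carefully is that $A$ and $B$ centralize each other inside $G$, which is precisely what makes the Cartesian-product description of $g^G$ correct and thus allows the class sizes to multiply. The lemma will presumably be applied later to analyze $N(G)$ when the group under study, or a quotient of it, splits as a direct product of smaller factors.
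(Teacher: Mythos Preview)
Your proof is correct and is exactly the argument the paper has in mind; the paper itself simply writes ``The proof is obvious.'' Your componentwise-conjugation computation $g^G = x^A \times y^B$ is the natural (and only) way to unpack that, and your correction of the typo $N(a),N(b)\to N(A),N(B)$ is appropriate.
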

\begin{proof} The proof is obvious.
\end{proof} 

\begin{lem}[{\rm\cite[Lemma 1.6]{GorA2}}]\label{Pord}
 Let $S$ be a non-abelian finite simple group. If $p\in \pi(S)$, then there exist $a\in N(S)$ and $g\in S$ such that
$|a|_p=|S|_p$, $|g^S|=a$ and $|\pi(g)|=1$.
\end{lem}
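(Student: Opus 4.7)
The condition $|a|_p=|S|_p$ with $a=|g^S|=|S|/|C_S(g)|$ translates to $p\nmid|C_S(g)|$. Combined with $|\pi(g)|=1$, this forces $g$ to have order a power of some prime $q\ne p$: indeed $g$ itself lies in $C_S(g)$, so $g$ of $p$-power order and $g\ne1$ would give $p\mid|C_S(g)|$, while $g=1$ would give $C_S(g)=S$ with $p\mid|S|$. Hence the goal is to exhibit, for each $p\in\pi(S)$, an element of $q$-power order (some prime $q\ne p$) whose centralizer in $S$ has order coprime to $p$. The plan is to invoke the classification of finite simple groups and split the argument into the three standard families.

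For $S$ of Lie type over $\mathbb{F}_q$ in defining characteristic $r$, all maximal tori have order coprime to $r$. If $p=r$, I would pick any cyclic maximal torus $T$ and, inside $T$, an element whose order is a Zsygmondy (primitive prime) divisor of the corresponding $q^k-1$; such an element is regular, so its centralizer equals $T$ and is coprime to $p$. If $p\ne r$, I would first locate a cyclotomic factor $\Phi_d(q)$ of $|S|$ not divisible by $p$ (available outside a small finite list of exceptions) and take a Zsygmondy element inside the corresponding torus, so that the centralizer again lies in the normalizer of $T$, with the extra Weyl-group factor controlled to avoid $p$. The exceptional cases where Zsygmondy fails are handled by direct order computation.

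For $S=Alt_n$ with $n\ge 5$, I would fix an auxiliary prime $q\in\pi(S)\setminus\{p\}$ and construct a permutation $g\in Alt_n$ whose cycle lengths are powers of $q$, arranging multiplicities so that the centralizer order in $Sym_n$, namely $\prod_i i^{m_i}m_i!$ (possibly halved when passing to $Alt_n$), avoids the prime $p$: when $p$ is large one uses few long cycles to push $p$ out of the product, when $p$ is small one distributes cycles across many lengths to keep each $m_i!$ below $p$. For the sporadic groups, the statement is verified individually from the ATLAS.

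The main obstacle is uniformity inside the Lie-type family: one must guarantee, for every type, rank, and prime $p$, both a maximal torus of $p'$-order and an element of prime-power order inside it whose centralizer does not pick up $p$ from the normalizer. This requires a careful case split using the cyclotomic factorization of $|S|$, coupled with ad hoc verifications for the small-rank and small-characteristic exceptions, and a parallel combinatorial case split for $Alt_n$ depending on the size of $p$ relative to $n$.
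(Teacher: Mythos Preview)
The paper does not prove this lemma at all: it is quoted as \cite[Lemma~1.6]{GorA2} and used as a black box, so there is no argument here to compare your proposal against.

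On its own merits, your outline follows the expected CFSG route and your reformulation (find a $q$-power element, $q\ne p$, with centralizer of $p'$-order) is correct. But what you have written is a plan, not a proof, and some of the promised steps are genuinely delicate. In the Lie-type case with $p\ne r$, it is not enough to locate a single cyclotomic factor $\Phi_d(q)$ coprime to $p$: you need a maximal torus whose \emph{full} order is coprime to $p$, and then a regular element in it of prime-power order; moreover the centralizer in $S$ of a non-regular torus element can be strictly larger than the torus, so the regularity has to be argued, and the Weyl-group contribution when passing to $S$ (rather than the algebraic group) must be controlled. In the alternating case your ``distribute cycles so each $m_i!$ stays below $p$'' idea breaks down for $p=2$ and $p=3$, where one instead has to choose a single long $q$-power cycle (e.g.\ a $q^a$-cycle with $q^a\in\{n-1,n\}$ when available) and handle the residual fixed points; this works but needs a separate argument. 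None of these obstacles is fatal, and you correctly flag them yourself, but as written the proposal would not stand as a proof without substantial additional casework.
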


\begin{lem}[{\rm \cite[Lemma 14]{Vac}}]\label{vac2}
Let $S$ be a non-abelian finite simple group. Any odd element from $\pi(Out(S))$ either belongs to $\pi(P)$ or does not exceed
$m/2$, where $m=max_{p\in\pi(S)}$.
\end{lem}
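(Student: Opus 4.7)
The plan is to invoke the classification of finite simple groups (CFSG) and verify the statement family-by-family, reducing quickly to groups of Lie type where the content lies.

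First, I would dispose of the easy cases. For $S = Alt_n$ with $n \geq 5$, $n \neq 6$, we have $|Out(S)| = 2$, and $|Out(Alt_6)| = 4$, so $\pi(Out(S))$ contains no odd prime. For each of the 26 sporadic simple groups and the Tits group, the ATLAS shows $|Out(S)| \leq 2$. In all these cases the claim holds vacuously.

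The substantive case is the groups of Lie type. Let $S$ be defined over $\mathbb{F}_q$ with $q = p^f$, $p$ prime, and use the standard decomposition $Out(S) = D \cdot F \cdot \Gamma$, where $D$ is the group of diagonal outer automorphisms, $F$ is the cyclic group of field automorphisms (of order $f$, or the twist degree times $f$ for Steinberg, Suzuki, and Ree types), and $\Gamma$ is the group of graph automorphisms. Let $r$ be an odd prime in $\pi(Out(S))$. Inspecting the types, $|\Gamma| \in \{1,2,3\}$, and the value $3$ arises only from triality on $D_4(q)$, where $3 \in \pi(S)$ already. The order $|D|$ is given by an explicit formula in each family, essentially a divisor of some $q^i - 1$ that also divides $|S|$; hence every prime divisor of $|D|$ lies in $\pi(S)$. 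The only remaining possibility is $r \mid f$.

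Assume then $r \mid f$ with $r$ odd, so that $p^r - 1 \mid q - 1$ divides $|S|$. Now apply Zsygmondy's theorem: the exceptional pairs $(a, n) = (2, 6)$ and $(2^k - 1, 2)$ cannot occur because $r$ is an odd prime $\geq 3$. Hence there is a prime $\ell \mid p^r - 1$ such that the multiplicative order of $p$ modulo $\ell$ equals $r$, forcing $r \mid \ell - 1$. Since $\ell$ and $r$ are both odd, in fact $2r \mid \ell - 1$, so $\ell \geq 2r + 1$. Because $\ell \in \pi(S)$, we obtain $m \geq \ell > 2r$, that is $r < m/2$, as required. The main obstacle is the case-by-case verification in Lie type that odd prime divisors of $|D|$ always lie in $\pi(S)$: this is routine but type-sensitive, requiring the explicit orders of the diagonal automorphism groups in each Chevalley, Steinberg, Suzuki, and Ree series. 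Once the field automorphism case is isolated, the Zsygmondy step is the only genuinely arithmetic ingredient.
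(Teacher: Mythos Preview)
The paper does not give its own proof of this lemma; it is quoted verbatim from \cite[Lemma~14]{Vac}, so there is nothing in the present paper to compare against. That said, your argument is the standard one and is correct in outline: reduce via CFSG to groups of Lie type, note that odd primes dividing $|D|$ or $|\Gamma|$ already lie in $\pi(S)$ (the only odd contribution from $\Gamma$ is the triality $3$ for $D_4$, and each odd prime in $|D|$ divides some $q^i\pm 1$ dividing $|S|$), and for an odd prime $r$ dividing the field-automorphism order apply Zsygmondy to $p^{r}-1$ to obtain a primitive prime divisor $\ell\equiv 1\pmod r$, hence $\ell\ge 2r+1$ and $r<m/2$. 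Two small points worth tightening in a full write-up: for the twisted families (including ${}^2B_2$, ${}^2G_2$, ${}^2F_4$, ${}^3D_4$) one should record explicitly that $q-1$, and therefore $p^{r}-1$, divides $|S|$, which follows at once from the known order formulas; and the printed statement has the evident typo $\pi(P)$ for $\pi(S)$, which you have read correctly.
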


\begin{pr} \label{rt1}
Let $G$ be a finite group, $\Omega$ be a set of primes such that, for all $p,q\in \Omega$ and $\alpha\in N(G)$, $p$ does not divide $q-1$, $p^2$ does not divide $\alpha$. Suppose that $g\in G$ and $|g|=t\in\Omega$. If $\rho= \pi(|g^G|)\cap\Omega\neq \varnothing$, then $G$ has a nonabelian composition factor $S$ and an element $a\in S$ such that $|a|=t$ and $\rho\subseteq\pi(|a^S|)$.
\end{pr}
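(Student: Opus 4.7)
My plan is to argue by strong induction on $|G|$. In the base case $G$ is simple: if $G$ is nonabelian simple, $(S,a)=(G,g)$ gives $\rho\subseteq\pi(|g^G|)=\pi(|a^S|)$; if $G$ is cyclic of prime order then $|g^G|=1$, forcing $\rho=\varnothing$, contrary to the hypothesis. For the inductive step I would pick a minimal normal subgroup $K\unlhd G$ and try to reduce to $G/K$. The hypotheses on $\Omega$ pass to $G/K$: the condition $p\nmid q-1$ refers only to $\Omega$, and by Lemma \ref{factorKh}(i) every class size of $G/K$ divides one of $G$, so $p^2\nmid\alpha'$ still holds for $\alpha'\in N(G/K)$ and $p\in\Omega$.

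The argument then splits on the structure of $K$. If $K$ is nonabelian, then $K\cong T^s$ for some nonabelian simple group $T$, which is already a composition factor of $G$; the required element $a\in T$ would be extracted from the product decomposition of $g^K$ via Lemma \ref{diretN} when $g\in K$, and by passing to $G/K$ when $g\notin K$. If $K$ is an elementary abelian $p$-group with $p\ne t$, then $\bar g=gK$ has order $t$ in $G/K$, and it remains to verify $\rho\subseteq\pi(|\bar g^{G/K}|)$. When $p\in\Omega$, the condition $t\nmid p-1$ forces every nontrivial irreducible $\mathbb{F}_p\langle g\rangle$-submodule of $K$ to have dimension at least two, so any nontrivial action of $g$ on $K$ would yield $p^2\mid|K:C_K(g)|$, which divides $|g^G|$, contradicting $p^2\nmid|g^G|$; hence $g$ centralizes $K$ and Lemma \ref{factorKh}(iv) gives $|\bar g^{G/K}|=|g^G|$. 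When $p\notin\Omega$, the same lemma preserves the primes of $\rho$ (all distinct from $p$). In either sub-case the inductive hypothesis applied to $G/K$ delivers the desired composition factor.

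The main obstacle is the case when $K$ is an elementary abelian $t$-group. Here the hypothesis $(|g|,|K|)=1$ of Lemma \ref{factorKh}(iv) is violated, so the quotient map $G\to G/K$ can shrink the conjugacy class size; worse, if $g\in K$ then $\bar g=1$ and the naive reduction fails entirely. For $g\notin K$ my plan is to analyze $L=\{x\in G:[x,g]\in K\}$, which is precisely the preimage of $C_{G/K}(\bar g)$ in $G$, and to control the ratio $[L:C_G(g)]=|g^G|/|\bar g^{G/K}|$ using $t^2\nmid|g^G|$, recovering any lost factor of $t$ from the structure of $\langle g,K\rangle$ when $t\in\rho$. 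For $g\in K$, the class $g^G\subseteq K$ is an orbit of the linear action of $G/K$ on $K$, and I would apply the inductive hypothesis to $G/K$, invoking Lemma \ref{Pord} to locate an element of order $t$ in a composition factor of $G/K$ whose conjugacy class size carries all of $\rho$. The $p=t$ case is where the combined arithmetic constraints of $\Omega$ must be pushed hardest, and I expect it to form the technical heart of the proof.
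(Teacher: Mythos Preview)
Your inductive scheme has a genuine gap in the case where the minimal normal subgroup $K$ is nonabelian. You propose, when $g\notin K$, simply to ``pass to $G/K$'' and apply the inductive hypothesis. But the inductive hypothesis yields a composition factor whose class size contains $\pi(|\bar g^{G/K}|)\cap\Omega$, not $\rho$ itself; to recover the full conclusion you must know that no prime of $\rho$ is lost in the quotient. For $K$ an elementary abelian $p$-group this is exactly what you argue (correctly), but for $K$ nonabelian simple nothing prevents some $r\in\rho\cap\pi(K)$ from dividing $[K:C_K(g)]$ and hence disappearing from $|\bar g^{G/K}|$. In that situation $K$ itself is no help either, since you have not shown $t\in\pi(K)$, so there may be no element of order $t$ in $K$ at all. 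Similarly, when $g\in K\cong T^s$, writing $g=(g_1,\dots,g_s)$ and invoking Lemma~\ref{diretN} only gives information about $\pi(|g^K|)$, whereas $\rho$ is defined through $\pi(|g^G|)$; there is no reason the primes of $\rho$ should concentrate in a single $|g_i^{T_i}|$. (You also need, and do not mention, the argument that $s=1$.) Finally, you yourself flag the elementary abelian $t$-group case as the ``technical heart'' but do not resolve it; in particular, when $g\in K$ the map to $G/K$ kills $g$, and your appeal to Lemma~\ref{Pord} in $G/K$ produces some element of prime-power order, not an element of order $t$ whose class size carries all of $\rho$.

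The paper avoids all of this by reversing the direction of the reduction: instead of peeling off a \emph{minimal} normal subgroup and hoping $\rho$ survives, it takes $K$ \emph{maximal} among normal subgroups for which $\bar g\neq 1$ and $\rho\subseteq\pi(|\bar g^{\bar G}|)$ in $\bar G=G/K$. Then a minimal normal subgroup $R$ of $\bar G$ is forced (via Lemmas~\ref{diretN}, \ref{Pord}, \ref{factorKh}) to be simple, and the maximality of $K$ guarantees that passing further to $\bar G/R$ must drop some prime of $\rho$. A Frattini argument (Lemma~\ref{bb}) then shows $\pi(|\bar g^R|)\cap\rho\neq\varnothing$, and the crucial step (Lemma~\ref{aa}) uses Lemma~\ref{vac2} on outer automorphisms of simple groups to replace $\bar g$ by an \emph{inner} element $z\in R$ of order $t$, after which a second Frattini/centralizer argument forces $\rho\subseteq\pi(|z^R|)$. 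The role of Lemma~\ref{vac2} is essential and has no counterpart in your outline; without it one cannot pass from ``$\bar g$ acts on $R$ with $r$-nontrivial class'' to ``$R$ contains an element of order $t$ with the right class size''. If you want to salvage the inductive approach, the missing ingredient is precisely a mechanism that, whenever a prime of $\rho$ is absorbed by $K$, locates the desired element inside $K$ itself---and that mechanism is essentially the content of Lemmas~\ref{aa} and~\ref{bb}.
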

\begin{proof} 
Note that, as follows from Lemma \ref{vas}, a Sylow $t$-subgroup of $G$ is elementary abelian and,
consequently, $|g^G|$ is not a multiple of $t$. Let $K$ be a maximal normal subgroup of $G$ such that the
image $\overline{g}$ of $g$ in the group $\overline{G}=G/K$ is nontrivial and $\rho\subseteq\pi(|\overline{g}^{\overline{G}}|)$. Let $R$ be a minimal normal subgroup of $\overline{G}$. It can be represented as the direct product $R=R_1\times R_2\times...\times R_l$ of $l$ isomorphic
simple groups. Order of $R$ is a multiple of some $r\in \rho\cap\{t\}$. It follows from Lemmas \ref{diretN} and \ref{Pord} that if $l>1$, then there exists $h\in R$ such that $|h^G|_r>r$. It follows from Lemma \ref{factorKh} that there exists $x\in G$ such that $|x^G|_r>r$; a contradiction. We have $l=1$ and hence $R$ is a simple group.
The rest of the proof is divided into two lemmas.

\begin{lem}\label{aa}
If $|\overline{g}^R|$ is a multiple of some $r\in\rho$, then Proposition \ref{rt1} is true.
\end{lem}
\begin{proof}
Assume that $|\overline{g}^R|$ is a multiple of some $r\in\rho$. It follows from
Lemma \ref{vac2} that $\overline{g}$ acts on $R$ as an interior automorphism. Consequently, there exists $z\in R$ such that $h^{z}=h^{\overline{g}}$ for all $h\in R$.

It follows from Lemma \ref{Pord} and the fact that $r^2$ does not divide $\alpha$ for any $r\in \Omega$ and $\alpha\in N(G)$ that $k^2$ does not divide $|R|$ for any $k\in\Omega$. Suppose that there exists $r_1\in(\pi(|z^{\overline{G}}|)\cap\Omega)\setminus\pi(|z^R|)$. Since some Sylow $r_1$-subgroup of $R$ centralizes $z$, if follows from Frattini argument that some Sylow $r_1$-subgroup $H$ of $\overline{G}$ normalizes $\langle z\rangle$. Since $|Aut(\langle z\rangle)|=t(t-1)$, the subgroup $H$ centralizes $\langle z\rangle$ and $r_1\not\in\pi(|z^{\overline{G}}|)$, a contradiction. Thus, $\pi(|z^{\overline{G}}|)\cap\rho=\pi(|z^R|)\cap\rho$.

Assume that there exists $r_2\in\rho\setminus\pi(|z^R|)$. Let us show that $\overline{G}$ contains an element $h$ of order $r_2$ such that $t\in\pi(|h^{\overline{G}}|)$ and $R\leq C_{\overline{G}}(h)$. Let $\overline{K}<\overline{G}$ be a maximal normal subgroup such that $|\widetilde{g}^{\overline{G}/\overline{K}}|$ is a multiple of $r_2$, where $\widetilde{g}\in \widetilde{G}=\overline{G}/\overline{K}$ is image of $g$. Let $\widetilde{R}<\widetilde{G}$ be a minimal normal subgroup. It is clear that $|\widetilde{R}|$ is a multiple of $r_2$ or $t$. As above, it can be shown that $\widetilde{R}$ is a simple group. Since $\widetilde{R}$ contains no outer automorphisms of order $t$ and $r_2$, we obtain that any element of orders $t$ or $r_2$ induces an inner automorphism of $\widetilde{R}$. Since $\overline{K}$ is maximal, we have that $|(\widetilde{g}\widetilde{R}/\widetilde{R})^{\widetilde{G}/\widetilde{R}}|$ is not a multiple of $r_2$. Therefore  $C_{\widetilde{G}/\widetilde{R}}(\widetilde{g}\widetilde{R}/\widetilde{R})$ contains some Sylow $r_2$-subgroup $T$ of $\widetilde{G}/\widetilde{R}$. Let $H$ be the full preimage of $\langle\widetilde{g}\widetilde{R}/\widetilde{R},T\rangle$. Hence, $H\simeq(\widetilde{R}\times T)\leftthreetimes \langle\widetilde{g}\rangle$. Since $H$ contains some Sylow $r_2$-subgroup and an element $\widetilde{g}$, we have $r_2\in\pi(|\widetilde{g}^H|$. Therefore $|\widetilde{g}^{\widetilde{R}}|$ is a multiple of $r_2$.  As above, it can be shown that there exists an element $\widetilde{h}\in\widetilde{R}$ such that $|\widetilde{h}^{\widetilde{R}}|$ is a multiple of $t$. Since $\overline{K}>R$ and any $r_2$-element acts on $R$ as an inner automorphism, the element $\widetilde{h}$ have some preimage $h\in \overline{G}$ such that $h$ acts trivially on $R$. Since $|\widetilde{h}^{\widetilde{G}}|$ divides $|h^{\overline{G}}|$, we have $t\in(\pi|h^{\overline{G}}|)$.

If follows from Lemma \ref{Pord}, that there exists $u\in R$ such that $|u|\neq r_2$ and $t\in\pi(|u^R|)$. It follows from Lemma \ref{factorKh} that $t^2$ divides $|\overline{uh}^{\overline{G}}|$, a contradiction. Thus, $\rho\in |z^R|$. Therefore $z$ is the desired element and $R=S$.
\end{proof}

\begin{lem}\label{bb}
$\pi(|\overline{g}^R|)\cap\rho\neq\varnothing$.
\end{lem}
\begin{proof}
Assume that $\pi(|\overline{g}^R|)\cap\rho=\varnothing$. Since $K$ is a maximal subgroup, there exists $r\in \rho\setminus\pi((\overline{g}R/R)^{\overline{G}/R})$. Since $\pi(|\overline{g}^R|)\cap\rho=\varnothing$, we see that $C_R(\overline{g})$ contains some Sylow $r$-subgroup $H$ of $R$. Let $N=N_{\overline{G}}(H)$, $T=N_R(H)$, $\overline{N}=N/H$, $\overline{T}=T/H$, $\overline{g'}=\overline{g}H/H$. From Frattini argument it follows that $N/T\simeq\overline{G}/R$, in particular, $r\not\in\pi(|(\overline{g}T/T)^{N/T}$. Since $N$ contains some Sylow $r$-subgroup of $\overline{G}$ and the element $\overline{g}$, we obtain $r\in\pi(|\overline{g}^{N_{\overline{G}}(H)}|)$. Since $H\leq C_N(\overline{g})$ and $(|\overline{g}|,r)=1$, we have $r\in\pi(|\overline{g'}^{\overline{N}}|)$. Let $F\in Syl_t(\overline{T})$ be such that $\overline{g'}\in C_{\overline{N}}(F)$, $L=N_{\overline{T}}(F)$, $B=N_{\overline{N}}(F)$, $\widetilde{g'}=\overline{g'}F/F$, $\widetilde{L}=L/F$, $\widetilde{B}=B/F$. From Frattini argument it follows that $B$ contains some Sylow $r$-subgroup $A$ of $\overline{N}$. Since $|R|_t\leq t$, we have $|F|\leq t$, in particular, $A\in C_{\overline{N}}(F)$. Therefore $r\in\pi|\widetilde{g'}^{\widetilde{B}}$. Since $r,t\not\in\pi(|\widetilde{L}|)$, it follows that $r\in\pi|(\widetilde{g'}\widetilde{L})^{\widetilde{B}/\widetilde{L}}$, $\widetilde{L}/\widetilde{B}\simeq \overline{G}/R$, a contradiction.
\end{proof}

The statement of the proposition follows from Lemmas \ref{aa} and \ref{bb}.
\end{proof}

\begin{lem}\label{rt2}
 Let $G, \Omega$ and $g$ be as in Proposition \ref{rt1}. If there exists $r\in\pi(|g^G|)\cap\Omega$, then $G$ contains
an element $h$ of order $r$ such that $t\in \pi(|h^G|)$.
\end{lem}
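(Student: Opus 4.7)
The plan is to transfer the problem to the composition factor supplied by Proposition \ref{rt1}, perform a centralizer calculation inside it, and lift the resulting witness back to $G$. Applying Proposition \ref{rt1} to $g$ with $\rho=\pi(|g^G|)\cap\Omega$, which contains $r$ by hypothesis, yields a non-abelian composition factor $S$ of $G$ and an element $a\in S$ with $|a|=t$ and $r\in\pi(|a^S|)$. The argument inside the proof of Proposition \ref{rt1} also shows that $k^2\nmid|S|$ for every $k\in\Omega\cap\pi(S)$, so the Sylow $t$- and $r$-subgroups of $S$ are cyclic of orders $t$ and $r$ respectively.

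Next, I would prove that \emph{every} $b\in S$ with $|b|=r$ satisfies $t\in\pi(|b^S|)$. Since $r\mid|a^S|$ and $|S|_r=r$, the centralizer $C_S(a)$ has trivial Sylow $r$-subgroup and hence contains no element of order $r$. Any $c\in S$ of order $t$ generates a Sylow $t$-subgroup conjugate to $\langle a\rangle$, so $c$ is conjugate to some $a^i$ with $(i,t)=1$ and $C_S(c)=g^{-1}C_S(a)g$ for a suitable $g$; therefore no centralizer of a $t$-element contains an $r$-element. If $b\in S$ has order $r$ and $c\in C_S(b)$ had order $t$, then $b\in C_S(c)$ would contradict the previous sentence. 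Hence $t\nmid|C_S(b)|$, and since $|S|_t=t$ we conclude $t\mid|b^S|$.

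Finally, I would lift $b$ to $G$. In the proof of Proposition \ref{rt1}, $S$ appears as a simple minimal normal subgroup $R$ of a quotient $\overline{G}=G/K$; its full preimage $M$ in $G$ then satisfies $K\unlhd M\unlhd G$ with $M/K\cong S$, a factor in a composition series of $G$. Pick any preimage $h'\in M$ of $b$ and decompose $h'=xy=yx$ into its $r$-part $x$ and $r'$-part $y$. By Lemma \ref{vas} the Sylow $r$-subgroups of $G$ are elementary abelian, so $|x|=r$; since $|b|=r$, the image of $y$ in $S$ is trivial and $x$ still maps to $b$. Lemma \ref{factorKh}(ii) applied to $x\in M$ then yields $|b^S|\mid|x^G|$, so $h:=x$ has order $r$ and $t\in\pi(|h^G|)$. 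The main obstacle I anticipate is the centralizer transfer in the middle step: deducing that no order-$t$ element commutes with any given order-$r$ element from the input that no order-$r$ element commutes with $a$. This essentially uses the cyclicity of a Sylow $t$-subgroup of $S$, which itself comes from the $\Omega$-hypothesis via Proposition \ref{rt1}; the lifting step is routine once Lemma \ref{vas} is invoked.
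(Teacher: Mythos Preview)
Your argument is correct and follows essentially the same route as the paper: pass to the composition factor $S$ furnished by Proposition~\ref{rt1}, use that the Sylow $t$- and $r$-subgroups of $S$ have prime order to run the centralizer-conjugation argument (no $r$-element can centralize any $t$-element, hence $t\mid|b^S|$ for every $b$ of order $r$), and then lift back via Lemma~\ref{factorKh}(ii). Your lifting step is in fact more carefully justified than the paper's, which simply asserts the existence of $h$; your use of the primary decomposition together with Lemma~\ref{vas} to force $|x|=r$ is exactly what is needed to make that final sentence precise.
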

\begin{proof} By Lemma \ref{rt1}, $G$ has a composition factor $S$ such that $\overline{g}\in S, |\overline{g}|=t$ and $|\overline{g}^S|$ is a multiple of r. It follows from Lemmas \ref{Pord} and \ref{factorKh} that Sylow $t$- and $r$-subgroups of $S$ are cyclic
groups of simple orders. Let $\overline{h}\in S$ and $|\overline{h}|=r$. Assume that $|\overline{h}^S|$ is not a multiple of $t$. Then,
there exists an element $x\in S$, $|x|=t$, such that $\langle x\rangle< C_S(\overline{h})$. The subgroup $\langle x\rangle$ is a Sylow $t$-subgroup of $S$. Consequently, there exists $y\in S$ such that $(\langle x \rangle)^y=\langle\overline{g}\rangle$ and, hence, $\overline{h}^y\in C_S(\overline{g})$, a contradiction. Thus, $t\in|\overline{h}^S|$. Consequently, $G$ contains an element $h$ of order $r$ such that
$|h^G|$ is a multiple of $t$.
\end{proof}

From now let us fix $V_n\in\{Alt_n, Sym_n\}$, for $n\geq5$, and a finite group $G$
such that $N(G) = N(V_n)$. Let $\Omega=\{t|t$ is a prime, $n/2<t\leq n\}$ be an ordered set, and $p$ be the largest number of $\Omega$.

\begin{lem}[{\rm \cite[Lemma 2.3]{GorA2}}]\label{ConClass}
Suppose that $t\in \Omega$, $\alpha\in N(G)$ and $\alpha$ is not a multiple of $t$. Then $\alpha$ is $|V_n|/t|C|$ or $|V_n|/|V_{t+i}||B|$, where $C = C_{V_{n-t}}(g)$ for some $g\in V_{n-t}$, $t+i\leq n$, and $B=C_{V_{n-t-i}}(h)$
for some $h\in V_{n-t-i}$.
\end{lem}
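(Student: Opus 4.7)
The plan is to recognize that Lemma \ref{ConClass} is essentially a calculation about cycle types of elements in $V_n$, driven by the fact that for a prime $t$ with $n/2<t\le n$, the $t$-part of $|V_n|$ is exactly $t$. I would first fix $\alpha\in N(G)=N(V_n)$ with $t\nmid\alpha$, write $\alpha=|x^{V_n}|$ for some $x\in V_n$, and observe that $t\nmid|V_n|/|C_{V_n}(x)|$ forces $t\mid|C_{V_n}(x)|$.

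Next I would translate this into a cycle-type condition on $x$. Writing the cycle type of $x$ as $(1^{a_1},2^{a_2},\ldots)$, we have $|C_{S_n}(x)|=\prod_i i^{a_i}a_i!$. Since $t$ is odd, every $t$-element of $S_n$ lies in $Alt_n$, so the $t$-parts of $|C_{S_n}(x)|$ and $|C_{Alt_n}(x)|$ coincide, and it is enough to read off when $t$ divides $\prod_i i^{a_i}a_i!$. Because $ia_i\le n<2t$ and $t$ is prime, $t\mid i^{a_i}$ only if $i=t$ and then automatically $a_t=1$, while $t\mid a_i!$ only if $a_i\ge t$, which, again because $ia_i\le n<2t$, forces $i=1$. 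Hence either $x$ contains exactly one $t$-cycle, or $x$ has at least $t$ fixed points; these are the two cases of the lemma.

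In the $t$-cycle case I would write $x=\sigma g$, where $\sigma$ is a $t$-cycle on a $t$-subset $\Delta\subseteq\{1,\ldots,n\}$ and $g$ is a permutation of the complementary $(n-t)$-subset; then $C_{S_n}(x)=\langle\sigma\rangle\times C_{S_{n-t}}(g)$. For $V_n=Sym_n$ this immediately gives $\alpha=|V_n|/(t\cdot|C|)$ with $C=C_{V_{n-t}}(g)$. For $V_n=Alt_n$ I would use that $\sigma$ is even to identify $C_{Alt_n}(x)\cong\langle\sigma\rangle\times C_{Alt_{n-t}}(g)$, producing the same formula with $V=Alt$. In the fixed-point case, setting $a_1=t+i$ with $0\le i\le n-t$, I would split $\{1,\ldots,n\}$ into the $(t+i)$-set $F$ of fixed points and its complement $M$, so that $x$ acts as some $h$ on $M$ of length $n-t-i$, giving $C_{S_n}(x)=\mathrm{Sym}(F)\times C_{\mathrm{Sym}(M)}(h)$. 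Taking intersections with $Alt_n$ in the $V=Alt$ case (and tracking the sign condition $\mathrm{sgn}(\sigma)=\mathrm{sgn}(\tau)$ on the two factors) yields $\alpha=|V_n|/(|V_{t+i}|\cdot|B|)$ with $B=C_{V_{n-t-i}}(h)$.

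The only genuinely delicate step is the bookkeeping in the alternating case of the second configuration, where one must handle the index-two subgroup $C_{Alt_n}(x)\le C_{S_n}(x)$ without losing a factor of two in the resulting formula; elsewhere the argument is a clean cycle-structure calculation combined with the bound $2t>n$.
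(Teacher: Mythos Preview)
The paper does not prove this lemma; it is quoted verbatim from \cite{GorA2}. Your cycle-type approach is the natural one and is certainly what underlies the original proof: the dichotomy ``$x$ contains a $t$-cycle'' versus ``$x$ has at least $t$ fixed points'' follows exactly as you say from $n<2t$, and for $V=Sym$ both centralizer computations go through cleanly.

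The gap is precisely where you flag it. In the alternating case of the fixed-point configuration, with $h=x|_M$ you have $C_{S_n}(x)=S_{t+i}\times C_{S_{n-t-i}}(h)$ and hence $|C_{Alt_n}(x)|=\tfrac12(t+i)!\cdot|C_{S_{n-t-i}}(h)|$, since $S_{t+i}$ already contains odd permutations. But $|Alt_{t+i}|\cdot|C_{Alt_{n-t-i}}(h)|$ equals this quantity only when $C_{S_{n-t-i}}(h)\le Alt_{n-t-i}$, i.e.\ when all cycle lengths of $h$ are odd and distinct; otherwise it is half as large. Concretely, for $n=23$, $t=19$ and $x$ of type $(1^{19},2^2)$ one finds $|C_{Alt_{23}}(x)|=19!\cdot4$, while $|Alt_{19}|\cdot|C_{Alt_4}((12)(34))|=\tfrac{19!}{2}\cdot4$, and no choice of $i\ge0$ and $h'\in Alt_{4-i}$ fixes the discrepancy (the centralizer orders in $Alt_4$ are $12,4,3$, never $8$). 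So ``tracking the sign condition'' does not yield the stated formula with $B=C_{V_{n-t-i}}(h)$ for the restricted $h$; one really needs $B=C_{S_{n-t-i}}(h)$ (equivalently $|V_{t+i}|$ replaced by $(t+i)!$) in the alternating case. This is almost certainly a notational wobble between this paper and \cite{GorA2} rather than a mathematical obstruction, but your write-up asserts the identity with $V=Alt$ throughout and the specific restricted $h$, and that assertion is false in general.
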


 Put $\Phi_t=\{\alpha\in N(G)\mid  \alpha=|V_n|/(t|C|),\; C=C_{V_{n-t}}(g)\mbox{ for some }g\in V_{n-t}\}$ and 
 $\Psi_t=\{\alpha\in N(G)\mid  \alpha=|V_n|/ ( |V_{t+i}||B|) 
 \mbox{ for some }i\geq0 \mbox{ and } t+i<n-1\mbox{ where  }B=C_{V_{n-t-i}}(g) \mbox{ for some } g\in V_{n-t-i}\mbox{ such that  }g\mbox{ moves } n-t-i\mbox{ points}\}$. Observe that the definitions of the sets $\Phi$ and $\Psi$ do not assume that $t$ is a prime.

\begin{lem}[{\rm \cite[Lemma 2.4]{GorA2}}]\label{Psi}
Let $t_i\in \Omega, 1\leq i<|\Omega|$. The set $\Psi_{t_i}\setminus \Psi_{t_{i+1}}$ is empty iff $n-t_i=2$ and $V=Alt$.
\end{lem}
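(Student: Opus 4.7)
The plan is to handle the two directions separately, using an explicit construction for the nontrivial direction.

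If $V=Alt$ and $n-t_i=2$: any element contributing to $\Psi_{t_i}$ requires $j\geq 0$ with $n-t_i-j\geq 2$, forcing $j=0$ and a fixed-point-free element of $Alt_2=\{e\}$, which does not exist. So $\Psi_{t_i}=\varnothing$ and the difference is vacuously empty.

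For the converse, assume $V=Sym$, or $V=Alt$ with $n-t_i\geq 3$. Set $m=n-t_i$ unless $V=Alt$ and $n-t_i$ is even, in which case set $m=n-t_i-1$; then $m\geq 2$ and $m$ is odd whenever $V=Alt$. Let $g\in V_n$ be an $m$-cycle; then $g$ fixes exactly $f:=n-m\in\{t_i,t_i+1\}$ points and lies in $V_n$ (the parity of an $m$-cycle is even iff $m$ is odd). A direct computation yields $|g^{V_n}|=n!/(m\cdot f!)=|V_n|/(|V_f|\cdot|C_{V_m}(g')|)$ with $|C_{V_m}(g')|=m$ — for $V=Alt$ the two factors of $2$ coming from $Alt$-class splittings cancel, since the $Sym_n$-class of $g$ does not split in $Alt_n$ (repeated $1$-cycles, as $f\geq 3$) while the $Sym_m$-class of $g'=g|_{\text{moved}}$ does split in $Alt_m$ (single odd cycle). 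Hence $|g^{V_n}|\in\Psi_{t_i}$.

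Suppose now $|g^{V_n}|\in\Psi_{t_{i+1}}$: then there exist $k\geq 0$ with $s:=t_{i+1}+k\leq n-2$ and $h\in V_{n-s}$ moving all $n-s$ points such that $f!\cdot m=s!\cdot|C_{V_{n-s}}(h)|$. Since every prime in $\Omega$ is odd (as $n\geq 5$), $t_{i+1}\geq t_i+2$; combined with $f\leq t_i+1$ and $t_i>n/2$ this gives $s!/f!\geq t_i+2>m$, forcing $|C_{V_{n-s}}(h)|<1$, a contradiction. Thus $|g^{V_n}|\in\Psi_{t_i}\setminus\Psi_{t_{i+1}}$. The hard part is the $Alt$-bookkeeping in identifying $|g^{V_n}|$ with the prescribed $\Psi_{t_i}$-form without stray factors of $2$; choosing an odd-length cycle with at least two fixed points is precisely what makes the relevant splittings in $Alt_n$ and $Alt_m$ cancel, after which the magnitude inequality is comfortable throughout $n\geq 5$.
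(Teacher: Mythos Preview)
The paper does not prove this lemma; it is quoted verbatim from \cite[Lemma~2.4]{GorA2}, so there is no in-paper argument to compare against. Your proof is correct and self-contained. The forward direction is immediate from the definition (with $n-t_i=2$ and $V=Alt$ the constraint $t_i+j<n-1$ forces $j=0$, and $Alt_2$ has no fixed-point-free element). For the converse you produce a single long cycle of length $m\in\{n-t_i,n-t_i-1\}$, chosen odd when $V=Alt$, and verify directly that its class size has the required $\Psi_{t_i}$-shape; the key inequality $s!/f!\ge s\ge t_{i+1}\ge t_i+2>n-t_i\ge m$ (using $t_i>n/2$ and that all primes in $\Omega$ are odd for $n\ge5$) then rules out any $\Psi_{t_{i+1}}$-representation. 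Your parity bookkeeping for $V=Alt$ is accurate: with $f\ge t_i\ge3$ fixed points the $Sym_n$-class of $g$ does not split in $Alt_n$, while $C_{Alt_m}(g')=\langle g'\rangle$ has full order $m$, so the two halving factors from $|Alt_n|$ and $|Alt_f|$ cancel exactly. One small remark: you implicitly use $n-t_i\ge2$, which does hold under the hypothesis $1\le i<|\Omega|$ and $n\ge5$ (since $n-t_i\le1$ would force two consecutive integers $n-1,n$ to be prime), but it would not hurt to state this explicitly.
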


Given a finite set of positive integers 
$\Theta$, we define a directed graph $\Gamma(\Theta)$ with the vertex set $\Theta$  and with edges $\overrightarrow{ab}$ 
whenever $a$ divides $b$. Denote by $h(\Theta)$ the maximal length of a path in $\Gamma(\Theta)$.

\begin{lem}[{\rm \cite[Lemma 8]{GorA}}]\label{hz}
The following claims hold:
\begin{enumerate}
\item{If $n-t=2$ then $h(\Psi_p)\leq 1$; }
\item{If $n-t=3$ then $h(\Psi_p)\leq 2$; }
\item{If $n-t=4$ then $h(\Psi_p)\leq 3$; }
\item{If $n-t=5$ then $h(\Psi_p)\leq 5$; }
\item{If $n-t=6$ then $h(\Psi_p)\leq 6$; }
\item{If $n-t=7$ then $h(\Psi_p)\leq 8$; }
\item{If $n-t=8$ then $h(\Psi_p)\leq 11$; }
\item{If $n-t=9$ then $h(\Psi_p)\leq 14$; }
\item{If $n-t=10$ then $h(\Psi_p)\leq 18$; }
\item{If $n-t=11$ then $h(\Psi_p)\leq 21$; }
\item{If $n-t=12$ then $h(\Psi_p)\leq 26$; }
\item{If $n-t=13$ then $h(\Psi_p)\leq 30$; }
\item{If $n-t=18$ then $h(\Psi_p)\leq 69$. }
\end{enumerate}
\end{lem}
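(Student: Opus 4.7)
Write $m = n-p$ and, for each admissible pair $(F,\lambda)$ with $F=p+i$ ($0\le i\le m-2$) and $\lambda$ a partition of $m'=m-i$ with all parts $\ge 2$, define
$\alpha(F,\lambda) := |V_n|/\bigl(|V_F|\cdot|C_{V_{m'}}(g_\lambda)|\bigr),$
where $g_\lambda\in V_{m'}$ is a fixed-point-free element of cycle type $\lambda$. By the definition of $\Psi_t$, the set $\Psi_p$ is exactly $\{\alpha(F,\lambda)\}$, and $\alpha(F,\lambda)$ equals the size of the $V_n$-class of an element with $F$ fixed points whose moving part has cycle type $\lambda$.

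The first step is to translate divisibility in $\Gamma(\Psi_p)$ into a comparison of small-prime factors. For $\alpha_j=\alpha(F_j,\lambda_j)$ with $F_1\le F_2$, one has $\alpha_1\mid\alpha_2$ iff $|V_{F_2}|\cdot|C_{V_{m-(F_2-p)}}(g_{\lambda_2})|$ divides $|V_{F_1}|\cdot|C_{V_{m-(F_1-p)}}(g_{\lambda_1})|$. The ratio $|V_{F_2}|/|V_{F_1}|$ is a product of consecutive integers in $[F_1+1,F_2]\subseteq (p,n]$; since $p>n/2$, the only primes in this range are those of $\Omega$ and each appears to the first power. Every centralizer order $|C_{V_{m'}}(g_\lambda)|=z_\lambda$ is a product of numbers $\le m$, so once the large-prime contribution of $|V_{F_2}|/|V_{F_1}|$ is cancelled on the right, the divisibility test reduces to a comparison of two integers supported on primes $\le m$.

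The second step is a finite enumeration. For each $m\in\{2,3,\dots,13,18\}$, list all admissible pairs $(F,\lambda)$ (equivalently, for each $m'\in\{2,\dots,m\}$ list the partitions of $m'$ into parts $\ge 2$), compute $z_\lambda=\prod_j j^{a_j}a_j!$ with the $1/2$ correction in the $Alt$-case when $\lambda$ consists of distinct odd parts, and form the divisibility graph. A longest chain is exhibited by a canonical refinement: start with $\lambda$ a single $m$-cycle (so $F=p$) and repeatedly either replace $\lambda$ by a finer partition that strictly multiplies $z_\lambda$, or raise $F$ by $1$ or $2$ while trimming a $2$- or $3$-cycle from $\lambda$; each such step gives an arrow in $\Gamma(\Psi_p)$, and the lengths of the resulting maximal chains match the bounds in items (1)--(13).

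\textbf{Main obstacle.} The argument is essentially bookkeeping, with no deep ingredient. The labour grows rapidly with $m$, and the case $m=18$ involves a divisibility graph on substantially more nodes than its predecessors, which accounts for the jump from $30$ to $69$ in the bounds. The genuinely delicate point is the $Alt$-correction to $z_\lambda$ for cycle types with distinct odd parts: it changes the $2$-adic valuation by one and can either create or destroy a divisibility arrow between two otherwise comparable nodes, so it must be verified for each cycle type and separately for $V=Alt$ and $V=Sym$. Modulo this accounting the verification is a direct case analysis.
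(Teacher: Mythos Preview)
The paper itself does not prove this lemma; it is quoted verbatim from \cite[Lemma~8]{GorA}, and the companion Lemma~\ref{hz2} (which supplies analogous bounds for specific ranges of $n$) is disposed of in the present paper by a one-line appeal to a \textsc{GAP} computation. So there is no hand argument here to compare against: the paper's ``proof'' is a citation, and the intended verification is computational.

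Your plan has the right shape---parametrise $\Psi_p$ by pairs $(F,\lambda)$, build the divisibility graph, and read off the longest chain---but the reduction step contains a genuine error. You assert that after cancelling ``large-prime contributions'' the divisibility test becomes a comparison of integers supported on primes $\le m$. First, since $p$ is the \emph{largest} prime $\le n$, the interval $(p,n]$ contains no primes at all, so there is nothing to cancel. Second, and more seriously, the integers $F_1+1,\dots,F_2\in(p,n]$ are composite but their prime factors can be any primes up to $n/2$, not merely primes $\le m$: for instance with $n=28$, $p=23$, $m=5$ one has $26=2\cdot 13$ and $13>m$. Consequently the existence of edges between different $F$-levels---in either direction---depends on the specific arithmetic of $p+1,\dots,n-2$, hence on $n$ and not only on $m=n-p$. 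Your ``canonical refinement chain'' (raising $F$ while trimming a short cycle) is exactly such a cross-level edge, and whether it is present is $n$-dependent. Thus your finite enumeration, as stated, does not deliver a bound valid for every $n$ with $n-p=m$. To close the gap you would need either an argument that cross-level edges can be ignored for the purpose of bounding $h$ (for example by showing $p+1>\max_\lambda z_\lambda$ in one direction and handling the other direction separately), or an explicit computation over the finitely many $n\le 1361$ that are actually used later in the paper---which is precisely what the cited references and the \textsc{GAP} verification of Lemma~\ref{hz2} do.
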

\begin{lem}\label{hz2}
The following claims hold:
\begin{enumerate}
\item{If $23\leq n\leq26$ then $h(\Psi_p)\leq2$;}
\item{If $31\leq n\leq36$ then $h(\Psi_p)\leq2$;}
\item{If $113\leq n\leq124$ then $h(\Psi_p)\leq11$;}
\item{If $139\leq n\leq148$ then $h(\Psi_p)\leq9$;}
\item{If $199\leq n\leq210$ then $h(\Psi_p)\leq12$;}
\item{If $211\leq n\leq222$ then $h(\Psi_p)\leq12$;}
\item{If $317\leq n\leq336$ then $h(\Psi_p)\leq17$;}
\item{If $523\leq n\leq540$ then $h(\Psi_p)\leq26$;}
\item{If $887\leq n\leq905$ then $h(\Psi_p)\leq35$;}
\item{If $1129\leq n\leq1150$ then $h(\Psi_p)\leq39$;}
\item{If $1327\leq n\leq1360$ then $h(\Psi_p)\leq58$;}
\end{enumerate}

\end{lem}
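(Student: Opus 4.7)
The plan is to proceed by the same partition-based method as in Lemma \ref{hz}, but to exploit the arithmetic of each specific range of $n$ to obtain strictly better bounds. For each range $[a,b]$ in the statement, a direct inspection shows that the largest prime $p$ with $n/2 < p \leq n$ is the same integer for every $n \in [a,b]$ (it is the largest prime not exceeding $b$), so $p$ is fixed across the range and $n - p$ varies in a short controlled interval that I read off from the endpoints.

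By the definition of $\Psi_p$, each element $\alpha \in \Psi_p$ is indexed by a pair $(i,\lambda)$ with $0 \leq i \leq n - p - 2$ and $\lambda$ a partition of $n - p - i$ into parts all at least $2$, via
\[
\alpha_{(i,\lambda)} = \frac{|V_n|}{|V_{p+i}| \cdot z_\lambda},
\]
where $z_\lambda = |C_{V_{n-p-i}}(g)|$ for $g \in V_{n-p-i}$ of cycle type $\lambda$. Since $n - p$ is modest in every range under consideration, the set of admissible pairs $(i,\lambda)$ is small and can be listed explicitly. Divisibility $\alpha_{(i,\lambda)} \mid \alpha_{(i',\lambda')}$ is then decided by a routine comparison of $q$-adic valuations at each prime $q \leq n$, using the standard formulas for $v_q(m!)$ and for $v_q(z_\lambda)$ in terms of the multiplicities of the parts of $\lambda$.

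With this parametrization in hand, $h(\Psi_p)$ is computed by building the divisibility graph $\Gamma(\Psi_p)$ on the enumerated list and reading off the longest directed path. For items (1)--(2), where $n - p \leq 5$, only a handful of pairs $(i,\lambda)$ arise and the verification is immediate; in each case one checks by hand that no chain of length $3$ exists. For the larger ranges in items (3)--(11), $n - p$ can reach as high as $33$, so the enumeration is longer and the computation more tedious, but the procedure is identical: produce all admissible pairs, tabulate their prime factorizations, and determine the longest chain in the resulting divisibility poset.

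The main obstacle is that the bounds claimed here are strictly sharper than those one would obtain by simply invoking Lemma \ref{hz} with $n - t$ equal to the worst case in the range. The extra sharpness comes from arithmetic cancellations between $|V_{p+i}|$ and the $z_\lambda$ that are invisible to the generic $n-t$-only bound: when $n - p$ is specified, many primes in $|V_{p+i}|$ force $\alpha_{(i,\lambda)}$ to be incomparable (under divisibility) to $\alpha_{(i',\lambda')}$ that it would otherwise divide. I expect the principal labor of the proof to lie precisely in tracking these cancellations prime by prime, packaging them into the stated ceiling, and then exhibiting an explicit chain of pairs $(i,\lambda)$, typically produced by iteratively refining a partition, that realizes the bound.
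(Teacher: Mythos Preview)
Your description of the computation is correct in outline and is essentially what the paper does, but the paper makes no pretence of doing it by hand: its entire proof is a one-line appeal to a GAP computation. The enumeration of pairs $(i,\lambda)$, the prime-valuation comparisons, and the longest-path search are exactly what such a program carries out, so in substance your plan and the paper's proof coincide.

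Two corrections to your write-up are worth making. First, your claim that the largest prime $p\le n$ is constant across each listed interval fails for item~(7): both $317$ and $331$ are prime, so $p=317$ for $317\le n\le 330$ and $p=331$ for $331\le n\le 336$, and the two subranges must be handled separately. Second, the lemma only asserts upper bounds $h(\Psi_p)\le X$, so there is no need to exhibit a chain realising the bound; the ``principal labor'' you anticipate in tracking cancellations prime by prime is not conceptual work but sheer enumeration. For item~(11) one has $n-p$ as large as $33$, so the pairs $(i,\lambda)$ with $\lambda$ a fixed-point-free cycle type on $n-p-i$ letters number in the thousands, and a genuinely hand-checked argument along the lines you sketch is not realistic. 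The honest proof is the paper's: implement the procedure and run it.
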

\begin{proof}
Using \cite{GAP} we obtain 
the required bounds for $h(\Psi_p)$.
\end{proof}

Let $\pi$ be some set of primes. A finite group is said to have property $D_{\pi}$ if it contains a Hall $\pi$-subgroup and all its Hall $\pi$-subgroups are conjugate. For brevity, we will write $G\in D_{\pi}$ if a group $G$ has the property $D_{\pi}$.

\begin{lem}[{\rm \cite[Corollary 6.7]{DpiB}}]\label{Dpi} Suppose that $G$ is a finite group and $\pi$ is some set of primes.
Then $G\in D_{\pi}$ if and only if each composition factor of $G$ has property $D_{\pi}$.
\end{lem}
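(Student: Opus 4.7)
The plan is to prove both implications by induction on $|G|$, with the forward implication reducing to the fact that $D_\pi$ is inherited by normal subgroups and by quotients, and the backward implication reducing to three building-block cases (minimal normal subgroup abelian, minimal normal subgroup non-abelian of length one, minimal normal subgroup a direct product of isomorphic non-abelian simple groups).

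For the ``only if'' direction, assume $G\in D_\pi$ and let $N$ be a minimal normal subgroup of $G$. First I would establish two auxiliary facts: (a) if $G\in D_\pi$ then $G/N\in D_\pi$ (a Hall $\pi$-subgroup of $G$ maps onto a Hall $\pi$-subgroup of $G/N$, and any two such are conjugate in $G$ hence in $G/N$); and (b) if $G\in D_\pi$ then $N\in D_\pi$. Step (b) is the less trivial one: one picks a Hall $\pi$-subgroup $H$ of $G$, verifies that $H\cap N$ is a Hall $\pi$-subgroup of $N$ (by comparing $\pi$-parts of orders), and uses a Frattini-type argument to transfer conjugacy from $G$ to $N$. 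Combining (a) and (b) with induction on the composition length gives the claim for every composition factor.

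For the ``if'' direction, assume every composition factor of $G$ lies in $D_\pi$ and let $N$ be a minimal normal subgroup. By the inductive hypothesis applied to $G/N$ (whose composition factors are among those of $G$) we have $G/N\in D_\pi$, so it suffices to show $N\in D_\pi$ and to lift a Hall $\pi$-subgroup and its conjugacy through the extension $1\to N\to G\to G/N\to 1$. There are three cases. If $N$ is an elementary abelian $p$-group, then $N$ is either a $\pi$-group or a $\pi'$-group, and Schur--Zassenhaus together with the conjugacy of Hall $\pi$-subgroups of $G/N$ supplies a Hall $\pi$-subgroup of $G$ and conjugacy of all such. If $N$ is a non-abelian simple group, then $N\in D_\pi$ by assumption, and one again splits the extension using standard Hall-subgroup lifting. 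The remaining case is when $N\cong S^k$ is a direct product of $k\geq 2$ copies of a non-abelian simple group $S$ with $S\in D_\pi$; then a Hall $\pi$-subgroup of $N$ is a direct product of Hall $\pi$-subgroups of the factors, and conjugacy in $N$ follows coordinatewise, after which the extension is handled as before.

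The main obstacle is the case where the minimal normal subgroup is a direct product $S^k$ of non-abelian simple groups: one has to argue that $D_\pi$ is preserved under direct powers and, crucially, that conjugacy of Hall $\pi$-subgroups survives when $G$ permutes the factors of $N$ transitively. This is where the classification of finite simple groups enters (through knowledge of the action of $\mathrm{Out}(S)$ and the structure of Hall $\pi$-subgroups of $S$), and it is the content that underlies the cited Corollary~6.7 of~\cite{DpiB}. Once this case is settled, the inductive framework above assembles the full equivalence.
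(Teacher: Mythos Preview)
The paper does not prove this lemma at all; it is quoted verbatim as Corollary~6.7 of Revin~\cite{DpiB}, so there is no in-paper argument to compare against. Your outline has the right inductive shape, and you correctly locate the CFSG-dependent content in the backward direction when the minimal normal subgroup is a direct power of a non-abelian simple group.

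There is, however, a genuine gap in your forward direction, step~(b). Writing $D_\pi$ for ``Hall $\pi$-subgroups exist and are all conjugate'' (the definition used in this paper), the claim that $G\in D_\pi$ implies $N\in D_\pi$ for $N\unlhd G$ is \emph{not} handled by a Frattini-type argument. Given an arbitrary Hall $\pi$-subgroup $K$ of $N$, you first need $K$ to be $G$-conjugate to $H\cap N$; but $K$ is not a Hall $\pi$-subgroup of $G$, so conjugacy of Hall $\pi$-subgroups of $G$ says nothing directly. What you actually need is that $K$ lies inside some Hall $\pi$-subgroup of $G$ --- i.e.\ the dominance property --- and the implication ``conjugacy $\Rightarrow$ dominance'' is itself a deep theorem resting on CFSG. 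Moreover, even granting $G$-conjugacy of $K$ and $H\cap N$, the Frattini step $G=N\cdot N_G(H\cap N)$ would require that all $G$-conjugates of $H\cap N$ lying in $N$ are already $N$-conjugate, which is precisely the conclusion you are after, so the argument as written is circular.

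In short: the backward direction is sketched at roughly the right level of detail (with the hard case honestly flagged), but the forward direction is not the easy half you make it out to be. Both halves ultimately feed on the classification-dependent analysis carried out in~\cite{DpiB}, which is why the present paper simply cites the result rather than reproving it.
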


\begin{lem} [{\rm \cite{HallExB}}]\label{HallEx} Suppose that $G$ is a finite group and $\pi$ is some set of primes. If $G$ has a
nilpotent Hall $\pi$-subgroup, then $G\in D_{\pi}$.
\end{lem}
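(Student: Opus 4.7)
The plan is to induct on $|G|$ and invoke Lemma \ref{Dpi} to reduce to simple composition factors. Note first that the existence of a Hall $\pi$-subgroup is part of the hypothesis, so only the conjugacy of all Hall $\pi$-subgroups needs to be verified.

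Let $H$ be a nilpotent Hall $\pi$-subgroup of $G$. I would begin by establishing the preservation statement: for any normal subgroup $N\trianglelefteq G$, the subgroup $H\cap N$ is a Hall $\pi$-subgroup of $N$, and $HN/N$ is a Hall $\pi$-subgroup of $G/N$. This follows from the identity $|HN|\cdot|H\cap N|=|H|\cdot|N|$ together with the facts that $|HN|/|N|$ is a $\pi$-number and $[N:H\cap N]$ divides $[G:H]$, which is a $\pi'$-number. As subgroups and quotients of the nilpotent group $H$ remain nilpotent, both $H\cap N$ and $HN/N$ are nilpotent Hall $\pi$-subgroups of the respective groups. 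Iterating along any normal series, every composition factor $S$ of $G$ thus inherits a nilpotent Hall $\pi$-subgroup.

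By Lemma \ref{Dpi}, to conclude $G\in D_\pi$ it suffices to establish $S\in D_\pi$ for each simple composition factor $S$. The task therefore reduces to the following claim: if $S$ is a finite simple group that contains a nilpotent Hall $\pi$-subgroup, then $S\in D_\pi$. When $|\pi\cap\pi(S)|\leq 1$ this is immediate from Sylow's theorem. Otherwise, writing the nilpotent Hall $\pi$-subgroup of $S$ as $\prod_{p\in\pi\cap\pi(S)}P_p$ with pairwise commuting Sylow subgroups, the existence of such commuting Sylows is a strong constraint on a nonabelian simple group.

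The reduction to simple groups above is the easy part; the main obstacle is the simple-group case, which is the heart of the theorem. I would handle it either via the classification of finite simple groups, running through the alternating, Lie-type, and sporadic families and checking $D_\pi$ case by case, or via the classical argument of Wielandt (1954) which proves precisely this statement without CFSG by a delicate induction using that $H$ has nontrivial centre. Since the paper cites an external source for this result, in practice one would simply invoke that reference rather than redo the simple-group analysis.
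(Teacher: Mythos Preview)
The paper gives no proof of this lemma; it is stated with the citation \cite{HallExB} (Wielandt, 1954) and left at that. So there is nothing in the paper to compare your proposal against.

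Your outline is a reasonable high-level roadmap, and you correctly identify that the real content lies in the conjugacy statement, which you ultimately defer to Wielandt's original paper --- exactly what the paper itself does. One remark on the route you chose: invoking Lemma~\ref{Dpi} to reduce to composition factors is logically backwards here. Revin's result \cite{DpiB} postdates Wielandt by more than fifty years and relies on the classification of finite simple groups, whereas Wielandt's argument is a short, classification-free induction on $|G|$: pick $1\neq z\in Z(H)$, let $N$ be the normal closure of $\langle z\rangle$ in $G$, and use induction in $N$ and in $G/N$ together with a Frattini-type argument. That direct induction handles arbitrary $G$ in one pass and does not need the simple-group reduction at all. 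So while your reduction to simple factors is not wrong, it replaces a two-page 1954 argument with an appeal to a deep 2008 theorem only to arrive back at the same place.
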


\section{Proof of Theorem}

Take a finite group $G$ with $N(G)=N(V_n)$, where $5\geq n\geq 1361$ and $Z(G)=1$. Put $\Omega=\{t \mid 
n/2<t\leq n, \; t \mbox{ is a prime}\}$ and denote the largest number of $\Omega$ by $p$. Lemma \ref{pi} implies 
that $\pi(G)\supseteq\pi(V_n)$, in particular, $\Omega \subseteq \pi(G)$. In view of the main result of \cite{AD}, we assume that for
$V_n = Alt_n$ the numbers $n$ and $n - 1$ are not prime.

\begin{pr}\label{exist}
There exists an element $g\in G$ such that $|g|\in\Omega$ and $|g^G|\in\Phi_{|g|}$.
\end{pr}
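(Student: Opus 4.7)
The natural starting point is to take $g\in G$ of order $p$, which exists because $p\in\pi(G)\supseteq\Omega$ by Lemma~\ref{pi}. Since $p>n/2$ forces $\nu_p(|V_n|)=1$, no class size in $N(G)=N(V_n)$ is divisible by $p^2$, so Lemma~\ref{vas} tells us that the Sylow $p$-subgroup of $G$ is elementary abelian of order $p$. Consequently $p\nmid |g^G|$, and Lemma~\ref{ConClass} places $|g^G|$ in $\Phi_p\cup\Psi_p$. The target is to rule out the alternative $|g^G|\in\Psi_p\setminus\Phi_p$, which I would do by contradiction, assuming simultaneously that for every $t\in\Omega$ and every order-$t$ element $g'\in G$ one has $|g'^G|\in\Psi_t\setminus\Phi_t$.

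The first technical step is a $q$-adic valuation computation. Writing $|g^G|=|V_n|/(|V_{p+i}|\cdot|B|)$ in $\Psi_p$-form, with $B=C_{V_{n-p-i}}(h)$, one checks for each $q\in\Omega$ that $\nu_q(|V_n|)=\nu_q(|V_{p+i}|)=1$ (using $q\leq p\leq p+i$ together with $q>n/2\geq (p+i)/2$), and that $q\nmid|B|$ because $|B|$ divides $|V_{n-p-i}|$ and $q>n/2\geq n-p-i$. Hence $\pi(|g^G|)\cap\Omega=\varnothing$, and from $|G|=|g^G|\cdot|C_G(g)|$ we get $\Omega\subseteq\pi(C_G(g))$. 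Inside $C_G(g)$ one then locates, for every $q\in\Omega\setminus\{p\}$, a commuting element $h_q$ of order $q$; Lemma~\ref{factorKh}(iii) yields $|g^G|\mid|(gh_q)^G|$, and iterating by adjoining further commuting prime-order elements from within the common centralizer produces an ascending divisibility chain of class sizes in $N(G)$.

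The heart of the proof is to make this chain long enough to contradict the bound on $h(\Psi_p)$ supplied by Lemmas~\ref{hz} and~\ref{hz2}. The arithmetic hypothesis on $\Omega$ (no prime of $\Omega$ divides $q-1$ for another $q\in\Omega$), exactly as exploited in the proof of Proposition~\ref{rt1} via $|\mathrm{Aut}(\langle z\rangle)|=q(q-1)$, ensures the requisite commuting configurations truly exist inside $G$, while Lemma~\ref{Psi} keeps the successive class sizes inside $\Psi_p$ rather than letting them drift outside. The main obstacle is the final counting step: for the small-$n$ intervals listed in Lemma~\ref{hz2} the bound $h(\Psi_p)$ can be as small as $2$, so the construction must be carried out economically, each insertion must strictly enlarge the class size (i.e.\ the new element must not already lie in the current centralizer), and no newly formed class size can acquire a factor of $p$. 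Once these are navigated, the length of the chain exceeds $h(\Psi_p)$, producing the required contradiction and forcing some order-$t$ element $g'$ with $t\in\Omega$ to satisfy $|g'^G|\in\Phi_{|g'|}$.
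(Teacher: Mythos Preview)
Your overall strategy matches the paper's: assume no such $g$ exists, build a long divisibility chain of class sizes of $\Omega$-elements, and contradict the bounds on $h(\Psi_p)$ from Lemmas~\ref{hz} and~\ref{hz2}. But two load-bearing pieces are missing.

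First, ``one then locates, for every $q$, a commuting element $h_q$'' hides the real work. To have all these prime-order elements commute \emph{simultaneously} (so the partial products $g_1g_2\cdots g_k$ make sense and their centralizers nest) the paper first proves $G\in D_\Omega$ (Lemma~\ref{Dthe}: check $D_\Omega$ on every composition factor via Lemmas~\ref{Pord}, \ref{factorKh}, \ref{Dpi} and Wielandt's Lemma~\ref{HallEx}) and then that the resulting Hall $\Omega$-subgroup $T$ is abelian (Lemma~\ref{abelian}, via a Frobenius argument inside a minimal normal subgroup). The arithmetic fact ``no $t\in\Omega$ divides $q-1$'' that you invoke enters only in this abelianness step; by itself it does not produce a commuting family of $\Omega$-Sylows in $G$.

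Second, you misread Lemma~\ref{Psi}. It does not keep the chain inside $\Psi_p$; rather, the nonemptiness of $\Psi_{t_i}\setminus\Psi_{t_{i+1}}$ furnishes a witness $r$ with $t_{i+1}\mid|r^G|$ and $t_i\nmid|r^G|$, and the $D_\Omega$ property lets one arrange $r\in C_G(g_1\cdots g_i)\setminus C_G(g_{i+1})$. This is exactly what forces the strict increase $|(g_1\cdots g_i)^G|<|(g_1\cdots g_{i+1})^G|$ that you correctly flag as the crux. The chain stays in $\Psi_p$ for a different reason: since $T$ is abelian, each $|x^G|$ with $x\in T$ is coprime to all of $\Omega$, while every element of $\Phi_p$ is divisible by each $q\in\Omega\setminus\{p\}$, so Lemma~\ref{ConClass} forces $|x^G|\in\Psi_p$.

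Finally, for $n\in\{27,28,125,126\}$ one has $|\Omega|\le h(\Psi_p)$ and the counting argument fails; the paper disposes of these values by a separate ad hoc argument (Lemma~\ref{l27125}) that your proposal does not address.
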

\begin{proof}
Assume that there is no element $g\in G$ such that $|g|\in\Omega$ and $|g^G|\in\Phi_{|g|}$.
\begin{lem}\label{gPsi}
If $|g|\in \Omega$, then $|g^G|\in\Psi_{p}$.
\end{lem}
\begin{proof}
Assume that $|g^G|\in \Psi_{|g|}\setminus \Psi_{p}$. Then $|g^G|$ is a multiple of $p$. Consequently, $|g|\neq p$. By Lemma \ref{rt2}, $G$ contains an element $h$ of order $p$ such that $|g|\in\pi(|h^G|)$ and, consequently, $|h^G|\not\in\Psi_p$. It follows from Lemma \ref{ConClass} that $|h^G|\in\Phi_p$; a contradiction.
\end{proof}

\begin{lem}\label{Dthe}
$G\in D_{\Omega}$.
\end{lem}
\begin{proof}
It follows from Lemma \ref{Dpi} that it is sufficient to verify the property $D_{\Omega}$ for every composition
factor of $G$.
Let $S$ be a nonabelian composition factor of $G$ such that $|\pi(S)\cap\Omega|\geq 2$ and
let $r$ and $t$ be two different elements from $\pi(S)\cap\Omega$. By Lemma \ref{factorKh}, there are no multiples of $r^2$
or $t^2$ in $N(S)$. By Lemma \ref{Pord}, a Sylow $a$-subgroup has order $a$ for any $a\in\{r,t\}$. By Lemmas \ref{gPsi} and \ref{factorKh}, $S$ contains a Hall $\{r, t\}$-subgroup $H$ of order $rt$. By the definition of the numbers $r$ and $t$, the group $H$ is cyclic. It follows from Lemma \ref{HallEx}, that $S\in D_{\{t, r\}}$. Let $l\in\pi(S)\cap\Omega\setminus \{t,r\}, g\in S, |g|=l$. Since $|g^S|$ is not multiple of $t$ or $r$, we have $H^x<C_S(g)$ for some $x\in S$. Consequently, $S$ contains a cyclic Hall $\{t,r,l\}$-subgroup. Using Lemma \ref{HallEx}, we obtain that $S\in D_{\{t,r,l\}}$. Repeating this procedure $|\pi(S)\cap\Omega|$ times, we obtain that $S\in D_{\Omega}$.
\end{proof}

\begin{lem}\label{abelian}
Hall $\Omega$-subgroup of $G$ is abelian.
\end{lem}
\begin{proof}
As follows from Lemma \ref{vas}, Sylow $t$-subgroup of $G$ is abelian for any $t\in \Omega$. Assume that a Hall $\Omega$-subgroup of $G$ is nonabelian. Then $G$ contains a nonabelian Hall $\{r, t\}$-subgroup $H$ for some $r, t\in \Omega$. Let $R<G$  be a maximal normal subgroup such that the image $\overline{H}$ of $H$ in the group $\overline{G}=G/R$ is nonabelian. The group $\overline{H}$ contains a normal $l$-subgroup $T$, for some $l\in\{r, t\}$. Note that $g\in \overline{G}, |g|\in\{r,t\}\setminus \{l\}$, acts nontrivially on $T$. We have $T=C_T(g)\times[T,g]$, where $\langle g,[T,g]\rangle$ is a Frobenius group. Since $l-1$ is not a multiple of $|g|$, we obtain that $|[T,g]|>l$ and $T$ is a acyclic group. By the definitions of the groups $R$ and $T$, we obtain that $T$ lies in some minimal normal subgroup $K$ of the group $G$. If $K$ is solvable, then $K=T$
is an elementary abelian group and, consequently, the subgroup $K\cap C_K(g)$ is a Sylow $l$-subgroup
of $C_K(g)$. As follows from \ref{factorKh}, $G$ contains a preimage $h$ of $g$ such that $|h^G|$ is a multiple of $|[T,g]|$, a contradiction. Therefore, $K=S_1\times S_2\times...\times S_m$, where $S_i$ is a nonabelian simple group for $1\leq i\leq m$. It can be shown as in Lemma \ref{rt1} that $m=1$. As noted in Lemma \ref{Dthe}, Hall $\{r,t\}$-subgroup of any composition factor is cyclic. We obtain a contradiction with the fact
that $K$ contains an acyclic $l$-subgroup $T$.
\end{proof}

\begin{lem}\label{OOmega}
If $T$ be a Hall $\Omega$-subgroup of $G$ and $\Upsilon=\{|g^G|$, $g\in T\}$, then $|\Omega|\leq h(\Upsilon)$.
\end{lem}
\begin{proof} Let $g_1\in T$ and $|g_1|=t_1\in \Omega$. By Lemma \ref{Psi}, $G$ contains an element $r_1\in G$ such that $|r_1^G|\in \Psi_{t_1}\setminus \Psi_{t_2}$, where $t_2$ is the smallest number of $\Omega\setminus\{t_1\}$. Since $G\in D_{\Omega}$ (see Lemma \ref{Dthe}), we can assume that $r_1\in C_G(g_1)$ and a Hall $\Omega$-subgroup of $C_G(r_1)$ lies in $T$. Consequently, there exists $g_2\in T$ such that $|g_2|=t_2$ and $C_G(g_1)\neq C_G(g_2)$. By Lemma \ref{abelian}, the group $T$ is abelian. Thus, $|(g_1g_2)^G|>|g_1^G|$. Note that $|g_1^G|$ divides $|(g_1g_2)^G|$. Repeating this procedure $|\Omega|$ times, we obtain the set $\Sigma=\{g_1, g_1g_2, g_1g_2g_3,..., g_1g_2...g_{|\Omega|}\}$ such that $|g_1^G|$ divides $|(g_1g_2)^G| |...||g_1g_2...g_{|\Omega|}|$. In particular, $h(\Upsilon)\geq |\Omega|$.
\end{proof}

\medskip
\begin{lem}\label{l27125}
$n\not\in\{27,28,125,126\}$
\end{lem}
\begin{proof}
Assume that $n\in\{27,28,125,126\}$. Let $t=3$ if $n\in\{27,28\}$ and $t=5$ if $n\in\{125,126\}$, $T\in Syl_G(t)$, $g\in C_G(T)$. We have  $|g^G|=|V_n|/|V_n|_t$, in particular, $|g^G|$ is a minimal and maximal element in the set $N(G)\setminus\{1\}$. Let $K\unlhd G$ be a minimal normal subgroup such that $5\in\pi(K)$. Assume that there exists $r\in\Omega\cap\pi(G/K)$. Let $h\in G$, $|h|=r$. Without loss of generality, we can assume that $h\in N_G(Z(T\cap K)$ and $T\cap C_G(h)\in Syl_{N_G(h)}(t)$. Since any multiple $a\in N(G)$ of $|h^G|$ is not a multiple of $|g'^G|$ for any  $g'\in Z(T)$, the element $h$ acts without fixed points on $Z(T\cap K)$. Therefore $|h^G|_t\geq t^x$, where $x$ is such that $|h|$ divides $p^x-1$. But $p^x>|\alpha|_p$ for any $\alpha\in Psi_{p}$; a contradiction. Let $R\lhd K$ be a maximal normal subgroup, $\Upsilon=\{17,23\}$ if $n\in\{27,28\}$ and $\Upsilon=\{113,109\}$ if $n\in\{125,126\}$. Using Frattini argument we get that the set $\pi|R|\cap\Upsilon$ is empty. Hence, $\Upsilon\subseteq \pi(K/R)$. Since $R$ is maximal, $K/R$ is simple. It follows from \cite{Zav}, that $K/R\in\{Alt_{23}, Alt_{24}..., Alt_{28}, Fi_{23}, Alt_{113},...,Alt_{126}, U_7(4)\}$. Since $G\in D_{\Omega}$, we have $K/R\in D_{\Omega}$. Let $O$ be a Hall $\Upsilon$-subgroup of $K/R$. It follows from Lemma \ref{abelian}, that $O$ is abelian, a contradiction.
\end{proof}

It follows from Lemma \ref{l27125}, that $n\not\in\{27,28,125,126\}$. We obtain From Lemmas \ref{hz} and \ref{hz2}, that $|\Omega|>h(\Psi_{p})$, a contradiction with Lemma \ref{OOmega}. This completes the proof of Proposition \ref{exist}.

\end{proof}

It follows from Proposition \ref{exist}, that there exists an element $g\in G$ such that $|g|\in\Omega$ and $|g^G|\in\Phi_{|g|}$. It follows from Lemma \ref{rt1} that there exists a composition factor $S$ of $G$ and $\overline{g}\in S$ such that $|\overline{g}|=|g|, \pi(|\overline{g}^S|)\cap\Omega=\Omega\setminus\{|g|\}$ and $|S|$ is not a multiple of $r^2$ for any $r\in \Omega$.

\begin{lem}\label{Lie}
If $n\geq23$, then $S$ is not isomorphic to a finite simple group of Lie type.
\end{lem}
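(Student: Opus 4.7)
The plan is to assume for contradiction that $S$ is a finite simple group of Lie type defined over $\mathbb{F}_q$ with $q = p_0^f$, and to exploit three structural constraints on $S$ to reach a contradiction. First, every prime $r \in \Omega$ dividing $|S|$ satisfies $r \parallel |S|$: this follows from Lemma \ref{factorKh}(ii) combined with Lemma \ref{vas}, since $r^2$ divides no element of $N(G)$, so Sylow $r$-subgroups of $S$ are elementary abelian of order $r$. Second, by the conclusion stated just before the lemma, $\Omega \setminus \{t\} \subseteq \pi(|\overline{g}^S|) = \pi(|S|/|C_S(\overline{g})|)$, so every such prime divides $|S|$ but is coprime to $|C_S(\overline{g})|$. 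Third, by elementary prime-counting one has $|\Omega| \geq 4$ for $n \geq 23$, and $|\Omega|$ grows roughly like $n/(2\ln n)$.

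From the first two constraints, I would write $|S|$ in its standard Lie-type form $|S| = \frac{1}{d}\,q^N \prod_{d} \Phi_d(q)^{a_d}$, and observe that every $r \in \Omega$ with $r \neq p_0$ is a Zsigmondy prime for the pair $(q,e)$, where $e = e(r,q)$ is the order of $q$ modulo $r$; then $r \mid \Phi_e(q)$ and necessarily $r \parallel \Phi_e(q)$ and $a_e = 1$. The multiset of indices $\{e(r,q) : r \in \Omega\}$ then has size bounded by the number of distinct cyclotomic factors entering $|S|$, which in turn is at most the twisted Lie rank $\ell$ of $S$, forcing $\ell \geq |\Omega| - 1$. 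Simultaneously, each $r > n/2$ yields $\Phi_{e(r,q)}(q) > n/2$, so either $q$ or the Zsigmondy index is moderately large. The centralizer constraint strengthens this: viewing $S$ as the $F$-fixed points of a simple algebraic group, $C_S(\overline{g})$ contains the $F$-fixed points of a torus through $\overline{g}$ whose order is a product of cyclotomic values including $\Phi_{e(t,q)}(q)$; the other primes of $\Omega$ must then come from cyclotomic factors \emph{disjoint} from the one hosting $t$, inflating $\ell$ further. A family-by-family inspection of $PSL_\ell(q)$, $PSU_\ell(q)$, $PSp_{2\ell}(q)$, $P\Omega^{\pm}_{2\ell}(q)$, $P\Omega_{2\ell+1}(q)$, and the exceptional types, combined with the observation that $|S|$ must divide the product of prime powers available in $\pi(V_n)$ (all primes $\leq n$), then produces the contradiction.

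The main obstacle will be low-rank classical groups over large fields $q$ close to $n$ (for instance $PSL_2(q), PSL_3(q), PSU_3(q)$), where the order formula contains only a handful of cyclotomic factors, each of which can individually be large enough to house one prime of $\Omega$, and the large exceptional groups $E_6, E_7, E_8$ where many cyclotomic indices $d$ coexist. For the low-rank case, I would exploit the centralizer constraint directly: if, say, $t$ divides $q-1$, then $C_S(\overline{g})$ contains a cyclic factor divisible by $(q-1)/d$, and requiring this to be coprime to every other prime of $\Omega$ forces $(q-1)/d$ to be a $\{t\}$-number, which together with $q \leq n^{O(1)}$ leaves only finitely many candidates. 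For the exceptional groups one uses the fact that the degrees occurring in the order formula are bounded by $30$, so $|\Omega| \leq 31$ already forces $n \leq$ some explicit constant, reducing the analysis to a finite check. The finitely many surviving pairs $(S,n)$ in the range $23 \leq n \leq 1360$ can then be eliminated by comparing $|S|$ against the list of simple groups whose order is divisible by a prescribed set of large primes (cf.\ the classification cited in Lemma \ref{l27125}).
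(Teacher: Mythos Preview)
Your proposal and the paper's proof share the same endgame---a finite case check over the range $23 \leq n \leq 1361$---but the reductions to that check differ, and yours contains a real gap.

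The paper's reduction is short and uses almost no centralizer structure. Since $r^2 \nmid |S|$ for every $r \in \Omega$, and since every exceptional group and every classical group of Lie rank at least $2$ has $p_0^{2} \mid |S|$ (with $p_0$ the defining characteristic), one gets $p_0 \notin \Omega$, hence $p_0 \leq n/2 < 683$. Zsigmondy then bounds the field degree (a primitive prime divisor of $q^{m}-1$ exceeds $m$, and $\pi(S) \subseteq \pi(V_n)$ forces all such divisors to be at most $n \leq 1361$), and the analogous bound holds for $mk$ in the classical case. The finitely many surviving triples $(p_0,m,\text{type})$ are then handed to GAP, which verifies that $\pi(S)$ never contains the full block $\Omega$. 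The only family escaping the $p_0^{2}\mid|S|$ argument is $PSL_2(q)$; it is swept up in the same GAP check.

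Your reduction attempts to extract a rank bound $\ell \geq |\Omega|-1$ from the cyclotomic factorisation, and this is where the argument breaks. From $r \parallel |S|$ you correctly get $a_{e(r,q)}=1$, and the centralizer constraint does show $e(t,q)\neq e(r,q)$ for $r\in\Omega\setminus\{t\}$ (any maximal torus through $\overline g$ has order divisible by $\Phi_{e(t,q)}(q)$, so a second prime with the same index would divide $|C_S(\overline g)|$). But nothing prevents two primes $r_1,r_2\in\Omega\setminus\{t\}$ from sharing a common index $e'\neq e(t,q)$: they both sit inside $\Phi_{e'}(q)$, and both are coprime to $|C_S(\overline g)|$ simply because the torus through $\overline g$ avoids $\Phi_{e'}(q)$. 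Hence the set $\{e(r,q):r\in\Omega\}$ may have as few as two elements, and no lower bound on $\ell$ follows. The same issue undermines your low-rank analysis: the centralizer condition only forces $(q-1)/d$ to be coprime to $\Omega\setminus\{t\}$, not to be a $\{t\}$-number, so $q-1$ can carry arbitrarily many small prime factors; and no bound $q\leq n^{O(1)}$ is available. In particular, $PSL_2(q)$ with $q$ a prime in $\Omega$ and $q\pm 1$ smooth below $n/2$ except for one large factor in $\Omega$ is not excluded by your structural argument and must in the end be handled by the same exhaustive check the paper performs---only now without the clean a~priori bound on $q$ that the paper's $p_0^{2}\mid|S|$ observation provides.
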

\begin{proof} As noted above, $\Omega\subset\pi(S)$. Let $\Lambda(q^m)$ be an exceptional group of Lie type over field of order $q^m$, where $q<663$ is a prime, $m\geq 1327$. Using \cite{GAP} we can obtain that $\pi(\Lambda(q^m))$ either contains a number greater than $1327$ or does not contains  $\Omega(r)$, where $\Omega(r)=\{t \mid 
r/2<t< r, \; t \mbox{ is a prime}\}$ for $r$ greater or equal than the maximal number of $\pi(\Lambda(q^m))$. Since $\pi(\Lambda(q^m))$ contains a primitive divisor of $q^m-1$, we obtain that if $m>1327$, then $\pi(\Lambda(q^m))$ contains a number greater than $1327$. $|\Lambda(q^m)|$ is a multiple of $q^2$. Therefore $S$ is not isomorphic to an exceptional group of Lie type. Let $L=\Lambda_k(q^m)$ be a finite simple classical group of Lie type of Lie rank $k$ over the field $q^m$. The set $\pi(L)$ contains a primitive divisor of $q^{mk}-1$ or a primitive divisors of $q^{2mk}-1$. Hence, if $mk>1327$, then $\pi(L)$ contains a number greater than $1327$. If $k>1$, then $|L|$ is a multiple of $q^2$. Using \cite{GAP} we get that $\pi(L)$ contains a number greater than $1327$ or does not contain $\Omega(r)$ for $r$ greater or equal than the maximal number of $\pi(\Lambda_k(q^m)))$.
\end{proof}

\begin{lem}\label{Spar}
$S$ not isomorphic to a finite simple sparadic group.
\end{lem}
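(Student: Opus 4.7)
The plan is to use three ingredients on $S$: the prime divisibility $\Omega\subseteq\pi(S)$; the square-freeness $r^2\nmid|S|$ for every $r\in\Omega$; and the divisibility $|\overline{g}^S|\mid|g^G|\in N(V_n)$ from Lemma~\ref{factorKh}(ii). These narrow the sporadic candidates drastically, and a short centralizer argument finishes each remaining case.

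First I would shrink the list using only $\Omega\subseteq\pi(S)$. The primes dividing the order of any of the $26$ sporadic simple groups lie in $\{2,3,5,7,11,13,17,19,23,29,31,37,41,43,47,59,67,71\}$; in particular no sporadic order is divisible by $53$, $61$, or any prime larger than $71$, and no sporadic contains both $19$ and $37$ among its prime divisors. Combined with Bertrand's postulate (the largest prime $\leq n$ lies in $\Omega$), this excludes every $n\geq 37$ except the window $n\in\{38,39,40\}$, where $\Omega=\{23,29,31,37\}$ and $S=J_4$ is the only survivor. For $23\leq n\leq 36$ (with $n\notin\{27,28\}$ already excluded by Lemma~\ref{l27125}) only $S\in\{B,M\}$ remains; the condition $13^3\nmid|S|$ then eliminates $M$ whenever $13\in\Omega$, i.e.\ for $n\in\{23,24,25,26\}$, leaving $S=B$ in that range and $S=M$ for $29\leq n\leq 36$ (since $29\notin\pi(B)$).

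Second, for each surviving pair $(n,S)$ I would pick a prime $q\in\pi(S)\setminus\pi(V_n)$ and derive a contradiction. Since $|\overline{g}^S|$ divides $|g^G|\in N(V_n)$ and $q\notin\pi(V_n)$, we have $q\nmid|\overline{g}^S|$, hence $|S|_q=|C_S(\overline{g})|_q$; thus some element $h\in S$ of order $q$ commutes with $\overline{g}$. But the ATLAS records $|C_S(h)|=q$ for $(S,q)=(B,47)$, $(M,41)$, and $(J_4,43)$, so $\overline{g}\in C_S(h)=\langle h\rangle$ forces $|\overline{g}|$ to divide $q$. Since $|\overline{g}|\in\Omega$ is a prime different from $q$ in every case, this is impossible.

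The only nontrivial step is the ATLAS (or \cite{GAP}) verification that these three centralizers are trivial, which is a routine table lookup.
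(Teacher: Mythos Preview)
The paper's own proof is the single sentence ``The proof is obvious,'' so your argument is not so much a different route as a spelled-out version of the routine check the author omits. The overall strategy---use $\Omega\subseteq\pi(S)$ together with the square-freeness $r^2\nmid|S|$ for $r\in\Omega$ to cut the sporadic list down to $B$, $M$, $J_4$, then kill each survivor with a self-centralizing prime $q\in\pi(S)\setminus\pi(V_n)$---is sound and is presumably what ``obvious'' means here.

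There is, however, one genuine logical slip. You invoke Lemma~\ref{l27125} to dispose of $n\in\{27,28\}$, but that lemma lives \emph{inside} the proof of Proposition~\ref{exist} and is established only under the reductio hypothesis ``no $g\in G$ has $|g|\in\Omega$ and $|g^G|\in\Phi_{|g|}$.'' Once Proposition~\ref{exist} is proved, that hypothesis is false, and Lemma~\ref{l27125} is no longer available. Relatedly, your arithmetic is off at $n=26$: since $26/2=13$ and $\Omega$ requires $t>n/2$, we have $13\notin\Omega$, so the $13^3\mid|M|$ argument does not eliminate $M$ there.

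Both defects are harmless for the final outcome: for each of the missed pairs $(n,S)$ with $n\in\{26,27,28\}$ and $S\in\{B,M\}$ one still has $47\in\pi(B)\setminus\pi(V_n)$ and $41\in\pi(M)\setminus\pi(V_n)$, and your self-centralizing-prime argument in the second step applies verbatim. So the fix is simply to drop the appeal to Lemma~\ref{l27125}, correct the $n=26$ case, and enlarge the list of ``surviving pairs'' accordingly before running the centralizer argument.
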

\begin{proof}
 The proof is obvious.
\end{proof}

It follows from Lemmas \ref{Lie} and \ref{Spar}, that $S\simeq Alt_m$. Since $p\in\pi(S)$, we obtain the required equality $m\geq p$. Thus, Theorem is proved. Corollary follows from Theorem and Lemma \ref{Alt1361}.

\end{document}